\newtheorem{thm}{Theorem}[section]
\newtheorem{lem}[thm]{Lemma}
\newtheorem{cor}[thm]{Corollary}
\numberwithin{equation}{section}
\titlespacing\section{0pt}{12pt plus 3pt minus 3pt}{1pt plus 1pt minus 1pt}
\titlespacing\subsection{0pt}{10pt plus 3pt minus 3pt}{1pt plus 1pt minus 1pt}
\titlespacing\subsubsection{0pt}{8pt plus 3pt minus 3pt}{1pt plus 1pt minus 1pt}
\acrodef{fom}[FOM]{full-order model}
\acrodef{rom}[ROM]{reduced-order model}
\acrodef{strom}[StROM]{structured \ac{rom}}
\acrodef{diagstrom}[D-StROM]{diagonal \ac{strom}}
\acrodef{siso}[SISO]{single-input single-output}
\acrodef{mimo}[MIMO]{multiple-input multiple-output}
\acrodef{lti}[LTI]{linear time-invariant}
\acrodef{ph}[pH]{port-Hamiltonian}
\acrodef{irka}[IRKA]{iterative rational Krylov algorithm}
\newcommand{\nfom}{\ensuremath{n}}
\newcommand{\nin}{\ensuremath{n_{\textnormal{i}}}}
\newcommand{\nout}{\ensuremath{n_{\textnormal{o}}}}
\newcommand{\Af}{\ensuremath{\mathcal{A}}}
\newcommand{\Bf}{\ensuremath{\mathcal{B}}}
\newcommand{\Cf}{\ensuremath{\mathcal{C}}}
\newcommand{\Ef}{\ensuremath{\mathcal{E}}}
\newcommand{\cAf}{\ensuremath{A}}
\newcommand{\cBf}{\ensuremath{B}}
\newcommand{\cCf}{\ensuremath{C}}
\newcommand{\yf}{\ensuremath{H}}
\newcommand{\nrom}{\ensuremath{r}}
\newcommand{\Ar}{\ensuremath{\hat{\mathcal{A}}}}
\newcommand{\Br}{\ensuremath{\hat{\mathcal{B}}}}
\newcommand{\Cr}{\ensuremath{\hat{\mathcal{C}}}}
\newcommand{\Er}{\ensuremath{\hat{\mathcal{E}}}}
\newcommand{\Fr}{\ensuremath{\hat{\mathcal{F}}}}
\newcommand{\Gr}{\ensuremath{\hat{\mathcal{G}}}}
\newcommand{\Kr}{\ensuremath{\hat{\mathcal{K}}}}
\newcommand{\cAr}{\ensuremath{\hat{A}}}
\newcommand{\cBr}{\ensuremath{\hat{B}}}
\newcommand{\cCr}{\ensuremath{\hat{C}}}
\newcommand{\cEr}{\ensuremath{\hat{E}}}
\newcommand{\cFr}{\ensuremath{\hat{F}}}
\newcommand{\cGr}{\ensuremath{\hat{G}}}
\newcommand{\cKr}{\ensuremath{\hat{K}}}
\newcommand{\car}{\ensuremath{\hat{\alpha}}}
\newcommand{\cbr}{\ensuremath{\hat{\beta}}}
\newcommand{\ccr}{\ensuremath{\hat{\gamma}}}
\newcommand{\cer}{\ensuremath{\hat{\varepsilon}}}
\newcommand{\cfr}{\ensuremath{\hat{\zeta}}}
\newcommand{\cgr}{\ensuremath{\hat{\eta}}}
\newcommand{\ckr}{\ensuremath{\hat{\kappa}}}
\newcommand{\nAr}{\ensuremath{n_{\Ar}}}
\newcommand{\nBr}{\ensuremath{n_{\Br}}}
\newcommand{\nCr}{\ensuremath{n_{\Cr}}}
\newcommand{\nEr}{\ensuremath{n_{\Er}}}
\newcommand{\nFr}{\ensuremath{n_{\Fr}}}
\newcommand{\nGr}{\ensuremath{n_{\Gr}}}
\newcommand{\nKr}{\ensuremath{n_{\Kr}}}
\newcommand{\xr}{\ensuremath{\hat{X}}}
\newcommand{\yr}{\ensuremath{\hat{H}}}
\newcommand{\hA}{\ensuremath{\hat{A}}}
\newcommand{\hG}{\ensuremath{\hat{G}}}
\newcommand{\hH}{\ensuremath{\hat{H}}}
\newcommand{\hx}{\ensuremath{\hat{x}}}
\newcommand{\hy}{\ensuremath{\hat{y}}}
\newcommand{\hcH}{\ensuremath{\hat{\cH}}}
\newcommand{\cH}{\ensuremath{\mathcal{H}}}
\newcommand{\cL}{\ensuremath{\mathcal{L}}}
\newcommand{\Htwo}{\ensuremath{\cH_{2}}}
\newcommand{\htwo}{\ensuremath{h_{2}}}
\newcommand{\Ltwo}{\ensuremath{\cL_{2}}}
\newcommand{\Hinf}{\ensuremath{\cH_{\infty}}}
\newcommand{\Linf}{\ensuremath{\cL_{\infty}}}
\newcommand{\HtwoLtwo}{\ensuremath{\cH_{2} \otimes \cL_{2}}}
\newcommand{\pp}{\ensuremath{\mathsf{p}}}
\newcommand{\pset}{\ensuremath{\mathcal{P}}}
\newcommand{\npar}{\ensuremath{n_{\pp}}}
\newcommand{\ppp}{\ensuremath{\mathsf{q}}}
\newcommand{\psetp}{\ensuremath{\mathcal{Q}}}
\newcommand{\nparp}{\ensuremath{n_{\ppp}}}
\newcommand{\CCparp}{\ensuremath{\CC^{\nparp}}}
\newcommand{\RRparp}{\ensuremath{\RR^{\nparp}}}
\newcommand{\CC}{\ensuremath{\mathbb{C}}}
\newcommand{\RR}{\ensuremath{\mathbb{R}}}
\newcommand{\CCpar}{\ensuremath{\CC^{\npar}}}
\newcommand{\CCoi}{\ensuremath{\CC^{\nout \times \nin}}}
\newcommand{\CCor}{\ensuremath{\CC^{\nout \times \nrom}}}
\newcommand{\CCri}{\ensuremath{\CC^{\nrom \times \nin}}}
\newcommand{\CCrr}{\ensuremath{\CC^{\nrom \times \nrom}}}
\newcommand{\RRf}{\ensuremath{\RR^{\nfom}}}
\newcommand{\RRi}{\ensuremath{\RR^{\nin}}}
\newcommand{\RRo}{\ensuremath{\RR^{\nout}}}
\newcommand{\RRr}{\ensuremath{\RR^{\nrom}}}
\newcommand{\RRff}{\ensuremath{\RR^{\nfom \times \nfom}}}
\newcommand{\RRfi}{\ensuremath{\RR^{\nfom \times \nin}}}
\newcommand{\RRof}{\ensuremath{\RR^{\nout \times \nfom}}}
\newcommand{\RRor}{\ensuremath{\RR^{\nout \times \nrom}}}
\newcommand{\RRri}{\ensuremath{\RR^{\nrom \times \nin}}}
\newcommand{\RRrr}{\ensuremath{\RR^{\nrom \times \nrom}}}
\DeclarePairedDelimiter{\myparen}{\lparen}{\rparen}
\DeclarePairedDelimiter{\abs}{\lvert}{\rvert}
\DeclarePairedDelimiterXPP{\normtwo}[1]{}{\lVert}{\rVert}{_{2}}{#1}
\DeclarePairedDelimiterXPP{\normF}[1]{}{\lVert}{\rVert}{_{\operatorname{F}}}{#1}
\DeclarePairedDelimiterXPP{\normHinf}[1]{}{\lVert}{\rVert}{_{\Hinf}}{#1}
\DeclarePairedDelimiterXPP{\normHtwoLtwo}[1]{}{\lVert}{\rVert}{_{\HtwoLtwo}}{#1}
\DeclarePairedDelimiterXPP{\normHtwo}[1]{}{\lVert}{\rVert}{_{\Htwo}}{#1}
\DeclarePairedDelimiterXPP{\normLinfLtwo}[1]{}{\lVert}{\rVert}{_{\Linf \otimes \Ltwo}}{#1}
\DeclarePairedDelimiterXPP{\normLinfmu}[1]{}{\lVert}{\rVert}{_{\Linf(\pset, \measure)}}{#1}
\DeclarePairedDelimiterXPP{\normLinf}[1]{}{\lVert}{\rVert}{_{\Linf}}{#1}
\DeclarePairedDelimiterXPP{\normLtwomu}[1]{}{\lVert}{\rVert}{_{\Ltwo(\pset, \measure)}}{#1}
\DeclarePairedDelimiterXPP{\normLtwo}[1]{}{\lVert}{\rVert}{_{\Ltwo}}{#1}
\DeclarePairedDelimiterXPP{\normhtwo}[1]{}{\lVert}{\rVert}{_{\htwo}}{#1}
\DeclarePairedDelimiterXPP{\ip}[2]{}{\langle}{\rangle}{}{#1, #2}
\DeclarePairedDelimiterXPP{\ipF}[2]{}{\langle}{\rangle}{_{\operatorname{F}}}{#1, #2}
\DeclarePairedDelimiterXPP{\ipHtwo}[2]{}{\langle}{\rangle}{_{\Htwo}}{#1, #2}
\DeclarePairedDelimiterXPP{\iphtwo}[2]{}{\langle}{\rangle}{_{\htwo}}{#1, #2}
\DeclarePairedDelimiterXPP{\ipLtwo}[2]{}{\langle}{\rangle}{_{\Ltwo}}{#1, #2}
\DeclarePairedDelimiterXPP{\ipLtwomu}[2]{}{\langle}{\rangle}{_{\Ltwo(\pset, \measure)}}{#1, #2}
\DeclarePairedDelimiterXPP{\mydiag}[1]{\operatorname{diag}}{\lparen}{\rparen}{}{#1}
\DeclarePairedDelimiterXPP{\trace}[1]{\operatorname{tr}}{\lparen}{\rparen}{}{#1}
\DeclarePairedDelimiterXPP{\vecop}[1]{\operatorname{vec}}{\lparen}{\rparen}{}{#1}
\DeclarePairedDelimiterXPP{\Real}[1]{\operatorname{Re}}{\lparen}{\rparen}{}{#1}
\DeclarePairedDelimiterXPP{\dist}[1]{\operatorname{dist}}{\lparen}{\rparen}{}{#1}
\DeclarePairedDelimiterXPP{\len}[1]{\operatorname{length}}{\lparen}{\rparen}{}{#1}
\DeclarePairedDelimiterXPP{\myspan}[1]{\operatorname{span}}{\lbrace}{\rbrace}{}{#1}
\newcommand{\measure}{\ensuremath{\mu}}
\newcommand{\measurep}{\ensuremath{\nu}}
\DeclareMathOperator{\dif}{d\!}
\DeclareMathOperator*{\esssup}{ess\,sup}
\newcommand{\difm}[1]{\ensuremath{\dif{\measure(#1)}}}
\newcommand{\difmp}[1]{\ensuremath{\dif{\measurep(#1)}}}
\newcommand{\fundef}[3]{\ensuremath{#1 \colon #2 \to #3}}
\newcommand{\tran}{^{\operatorname{T}}}
\newcommand{\herm}{^{*}}
\newcommand{\imag}{\boldsymbol{\imath}}
\definecolor{mplC0}{HTML}{1F77B4}
\definecolor{mplC1}{HTML}{FF7F0E}
\definecolor{mplC2}{HTML}{2CA02C}
\definecolor{mplC3}{HTML}{D62728}
\definecolor{mplC4}{HTML}{9467BD}
\definecolor{mplC5}{HTML}{8C564B}
\definecolor{mplC6}{HTML}{E377C2}
\definecolor{mplC7}{HTML}{7F7F7F}
\definecolor{mplC8}{HTML}{BCBD22}
\definecolor{mplC9}{HTML}{17BECF}
\pgfplotsset{compat=1.10}
\let\le\leqslant%
\let\hat\widehat%
\definecolor{lime}{HTML}{A6CE39}
\DeclareRobustCommand{\orcidicon}{
	\begin{tikzpicture}
	\draw[lime, fill=lime] (0,0)
	circle [radius=0.16]
	node[white] {{\fontfamily{qag}\selectfont \tiny ID}};
	\draw[white, fill=white] (-0.0625,0.095)
	circle [radius=0.007];
	\end{tikzpicture}
	\hspace{-2mm}
}
\xdef\csname orcid\x\endcsname{\noexpand\href{https://orcid.org/\csname orcidauthor\x\endcsname}
      {\noexpand\orcidicon}}
\title{Interpolatory Necessary Optimality Conditions for Reduced-order Modeling
  of Parametric Linear Time-invariant Systems}
\author[1]{Petar~Mlinari\'c\orcidA{}}
\author[2]{Peter~Benner\orcidB{}}
\author[3]{Serkan~Gugercin\orcidC{}}
\affil[1]{
  Department of Mathematics,
  Virginia Tech,
  Blacksburg,
  VA 24061
  (\texttt{mlinaric@vt.edu})
}
\affil[2]{
  Max Planck Institute for Dynamics of Complex Technical Systems,
  39106 Magdeburg,
  Germany
  (\texttt{benner@mpi-magdeburg.mpg.de})
}
\affil[3]{
  Department of Mathematics and
  Division of Computational Modeling and Data Analytics,
  Academy of Data Science,
  Virginia Tech,
  Blacksburg,
  VA 24061
  (\texttt{gugercin@vt.edu})
}
\begin{document}

\twocolumn[ % Method A for two-column formatting
  \begin{@twocolumnfalse} % Method A for two-column formatting

\maketitle

\begin{abstract}
  Interpolatory necessary optimality conditions for
$\Htwo$-optimal reduced-order modeling of
non-parametric linear time-invariant (LTI) systems
are known and well-investigated.
In this work, using the general framework of
$\Ltwo$-optimal reduced-order modeling of
parametric stationary problems,
we derive interpolatory $\HtwoLtwo$-optimality conditions for
parametric LTI systems with a general pole-residue form.
We then specialize this result to recover known conditions for
systems with parameter-independent poles and
develop new conditions for a certain class of systems
with parameter-dependent poles.

\end{abstract}

\keywords{%
  reduced-order modeling,
parametric systems,
optimization,
linear systems

}

\vspace{0.35cm}

  \end{@twocolumnfalse} % Method A for two-column formatting
] % Method A for two-column formatting

\section{Introduction}

Consider a parametric \ac{lti} system (\ac{fom})
\begin{subequations}\label{eq:p-fom}
  \begin{align}
    \Ef(\ppp) \dot{x}(t, \ppp)
     & = \Af(\ppp) x(t, \ppp) + \Bf(\ppp) u(t), \\*
    y(t, \ppp)
     & = \Cf(\ppp) x(t, \ppp),
  \end{align}
\end{subequations}
where
$\ppp \in \psetp \subseteq \RRparp$ is the parameter vector;
$u(t) \in \RRi$ is the input;
$x(t, \ppp) \in \RRf$ is the state;
$y(t, \ppp) \in \RRo$ is the output; and
$\Ef(\ppp), \Af(\ppp) \in \RRff$,
$\Bf(\ppp) \in \RRfi$, and
$\Cf(\ppp) \in \RRof$ are parametric matrices.
Given the \ac{fom} in~\eqref{eq:p-fom},
the goal of parametric reduced-order modeling is
to find a reduced parametric \ac{lti} system (\ac{rom})
\begin{subequations}\label{eq:p-rom}
  \begin{align}
    \Er(\ppp) \dot{\hx}(t, \ppp)
     & = \Ar(\ppp) \hx(t, \ppp) + \Br(\ppp) u(t), \\*
    \hy(t, \ppp)
     & = \Cr(\ppp) \hx(t, \ppp),
  \end{align}
\end{subequations}
where
$\hx(t, \ppp) \in \RRr$ is the reduced state with $\nrom \ll \nfom$;
$\hy(t, \ppp) \in \RRo$ is the approximate output; and
$\Er(\ppp), \Ar(\ppp) \in \RRrr$,
$\Br(\ppp) \in \RRri$, and
$\Cr(\ppp) \in \RRor$ are the reduced parametric matrices,
such that $\hy$ approximates $y$
for a wide range of inputs $u$ and
a set of parameters $\ppp \in \psetp$.
Parametric dynamical systems are ubiquitous in applications ranging from inverse
problems to uncertainty quantification to optimization and model reduction of
parametric systems has been a major research topic;
we refer the reader to, e.g.,~\cite{BenGW15,BenGQ+20b} for more details.

Both the \ac{fom} and \ac{rom} can be fully described by their (parametric)
transfer functions, given by, respectively,
\begin{align}
  \nonumber
  H(s, \ppp)
   & =
  \Cf(\ppp) \myparen*{s \Ef(\ppp) - \Af(\ppp)}^{-1} \Bf(\ppp)
  \ \text{ and} \\
  \label{eq:p-rom-tf}
  \hH(s, \ppp)
   & =
  \Cr(\ppp) \myparen*{s \Er(\ppp) - \Ar(\ppp)}^{-1} \Br(\ppp).
\end{align}
As in any approximation problem,
one needs a metric to judge the quality of the approximation.
For \emph{non-parametric} \ac{lti} systems, i.e.,
when $\Ef, \Af, \Bf, \Cf$ are constant matrices,
the $\Htwo$-norm has been one of the most commonly used metrics in (optimal)
reduced-order modeling~\cite{AntBG20,GugAB08,MeiL67,Wil70}.
For parametric \ac{lti} systems we consider here,
the $\HtwoLtwo$-norm introduced in~\cite{BauBBG11} provides a natural extension.
The goal of $\HtwoLtwo$-optimal reduced-order modeling
is to find \iac{rom} that (locally) minimizes the $\HtwoLtwo$ error
\begin{equation}\label{eq:h2l2-norm}
  \normHtwoLtwo*{H - \hH}
  =
  \myparen*{
    \int_{\psetp}
    \normHtwo*{H(\cdot, \ppp) - \hH(\cdot, \ppp)}^2
    \difmp{\ppp}
  }^{1/2},
\end{equation}
where $\measurep$ is a measure over $\psetp$ and the $\Htwo$ norm is given as
\begin{equation*}
  \normHtwo*{H(\cdot, \ppp)}
  =
  \myparen*{
    \frac{1}{2 \pi}
    \int_{-\infty}^{\infty}
    \normF*{H(\imag \omega, \ppp)}^2
    \dif{\omega}
  }^{1/2}.
\end{equation*}
The $\HtwoLtwo$ error gives an upper bound for the output error
\begin{equation*}
  \normLinfLtwo*{y - \hy}
  \le
  \normHtwoLtwo*{H - \hH}
  \normLtwo{u},
\end{equation*}
where
\begin{equation*}
  \normLinfLtwo*{y}
  =
  \myparen*{
    \int_{\psetp}
    \normLinf{y(\cdot, \ppp)}^2
    \difmp{\ppp}
  }^{1/2}
\end{equation*}
is the $\Linf \otimes \Ltwo$ norm of the output,
further justifying the use of the $\HtwoLtwo$ norm in parametric reduced-order
modeling.

The $\HtwoLtwo$ norm with $\measurep$ as the Lebesgue measure was introduced in
Baur~et~al.~\cite{BauBBG11}.
There, for the special case where $\Er$ and $\Ar$ are
\emph{parameter-independent},
the $\HtwoLtwo$-optimal reduced-order modeling problem was converted to a
non-parametric $\Htwo$-optimal reduced-order modeling problem and
interpolatory optimality conditions could be established.
For another simplified problem where the poles of $\hH$ do \emph{not} vary with
the parameter $\ppp \in \{|\ppp| = 1 \} \subset \mathbb{C}$,
Grimm~\cite{Gri18} used an $\HtwoLtwo$ norm and
derived interpolatory conditions and proposed an optimization algorithm.

A common assumption in parametric reduced-order modeling methods is
parameter-separability.
For the \ac{rom}~\eqref{eq:p-rom},
this would mean that the reduced quantities can be written as
\begin{subequations}\label{eq:lti-p-sep-form}
  \begin{alignat}{4}
    \label{eq:lti-p-sep-form-a}
    \Er(\ppp)
     & = \sum_{\ell = 1}^{\nEr} \cer_{\ell}(\ppp) \cEr_{\ell},
     & \quad
    \Ar(\ppp)
     & = \sum_{i = 1}^{\nAr} \car_i(\ppp) \cAr_i,              \\
    \label{eq:lti-p-sep-form-b}
    \Br(\ppp)
     & = \sum_{j = 1}^{\nBr} \cbr_j(\ppp) \cBr_j,
     & \quad
    \Cr(\ppp)
     & = \sum_{k = 1}^{\nCr} \ccr_k(\ppp) \cCr_k,
  \end{alignat}
\end{subequations}
for some
functions $\fundef{\cer_{\ell}, \car_i, \cbr_j, \ccr_k}{\psetp}{\RR}$,
constant matrices $\cEr_{\ell}$, $\cAr_i$, $\cBr_j$, $\cCr_k$, and
positive integers $\nEr, \nAr, \nBr, \nCr$.
We call \iac{rom} of this form \iac{strom}.
This form has also been considered in $\HtwoLtwo$-optimal reduced-order modeling
methods.
In particular,
Petersson~\cite{Pet13} considered the case of a discretized $\HtwoLtwo$ norm,
i.e., where $\measurep$ is a sum of Dirac measures,
proposing an optimization algorithm to find
a locally $\HtwoLtwo$-optimal \ac{rom}.
Additionally,
Hund~et~al.~\cite{HunMMS22} proposed an optimization algorithm for
$\HtwoLtwo$-optimal reduced-order modeling using quadrature for the case of
Lebesgue measure.
Both of these works used matrix equation-based, Wilson-type
conditions~\cite{Wil70} and not interpolation.

The $\HtwoLtwo$-norm was also used by Brunsch~\cite{Bru17} to derive error
bounds within a reduced-order modeling framework for parametric \ac{lti} systems
with symmetric positive definite $\Ef(\ppp)$ and $-\Af(\ppp)$.
The method is based on sparse-grid interpolation in the parameter domain.
It satisfies (Hermite) interpolation conditions and preserves stability,
but has no proven optimality properties.
See also~\cite{IonA14,BraGZ+22,RodBG23} for some data-driven approaches.

In our recent work on $\Ltwo$-optimal reduced-order modeling~\cite{MliG23a},
we covered both \ac{lti} systems and parametric stationary problems.
We developed interpolatory necessary optimality
conditions in~\cite{MliG23b} for certain types of \acp{strom},
including \emph{non-parametric} \ac{lti} systems and
parametric stationary problems.
We also showed that the interpolatory conditions of~\cite{Gri18}
can be derived from our generalized $\Ltwo$-optimality conditions.
However, as stated before,~\cite{Gri18} assumes the poles are fixed.

Therefore, unlike for the non-parametric \ac{lti} problems
for which interpolatory optimality conditions for $\Htwo$ model reduction have
been well-established~\cite{MeiL67,GugAB08,AntBG20},
there is a significant gap in the development of interpolatory optimality
conditions for $\HtwoLtwo$-optimal parametric \ac{rom} construction,
except for the special cases mentioned above.
Our goal in this paper is to close this gap and
to develop interpolatory optimality conditions
for the more general setting of parametric \ac{lti} systems.
Additionally, we show that our analysis contains the earlier conditions
from~\cite{BauBBG11} as a special case.

We provide background in~\Cref{sec:background}.
While \Cref{sec:h2l2-diag} covers the general parametric \acp{diagstrom} case,
\Cref{sec:param-io,sec:param-dyn} focus on simplified cases,
leading to optimality conditions that can be directly linked to the bitangential
Hermite interpolation framework.
We conclude with~\Cref{sec:conclusion}.

\section{Background}%
\label{sec:background}

Here we recall one of the main results of~\cite{MliBG23},
specifically, the necessary $\Ltwo$-optimality conditions for \acp{diagstrom}
(based on prior work in~\cite{MliG23a,MliG23b}),
which will form the foundation of our analysis.

Given a parameter-to-output mapping
\begin{equation*}
  \fundef{\yf}{\pset}{\CCoi},
\end{equation*}
the goal in~\cite{MliBG23} is to construct \iac{strom}
\begin{subequations}\label{eq:rom}
  \begin{align}
    \Kr(\pp) \xr(\pp) & = \Fr(\pp),          \\*
    \yr(\pp)          & = \Gr(\pp) \xr(\pp),
  \end{align}
\end{subequations}
with a parameter-separable form
\begin{equation}\label{eq:rom-p-sep-form}
  \Kr(\pp) = \sum_{i = 1}^{\nKr} \ckr_i(\pp) \cKr_i, \ \
  \Fr(\pp) = \sum_{j = 1}^{\nFr} \cfr_j(\pp) \cFr_j, \ \
  \Gr(\pp) = \sum_{k = 1}^{\nGr} \cgr_k(\pp) \cGr_k,
\end{equation}
where
$\xr(\pp) \in \CCri$ is the reduced state,
$\yr(\pp) \in \CCoi$ is the approximate output,
$\Kr(\pp) \in \CCrr$,
$\Fr(\pp) \in \CCri$,
$\Gr(\pp) \in \CCor$,
$\fundef{\ckr_i, \cfr_j, \cgr_k}{\pset}{\CC}$,
$\cKr_i \in \CCrr$,
$\cFr_j \in \CCri$, and
$\cGr_k \in \CCor$.
The goal is to construct $\Kr(\pp)$, $\Fr(\pp)$, and $\Gr(\pp)$ such that
$\yr(\pp) = \Gr(\pp) \Kr(\pp)^{-1} \Fr(\pp)$ is an optimal $\Ltwo$-approximation
to the original mapping $\yf(\pp)$, i.e.,
\begin{equation}\label{eq:l2-norm}
  \normLtwo*{\yf - \yr}
  =
  \myparen*{
    \int_{\pset}
    \normF*{\yf(\pp) - \yr(\pp)}^2
    \difm{\pp}
  }^{1/2}
\end{equation}
is minimized
where $\measure$ is a measure over $\pset$.
The $\HtwoLtwo$ norm~\eqref{eq:h2l2-norm} is a special case of the
$\Ltwo$-norm~\eqref{eq:l2-norm} for appropriately defined $\pp$ and $\measure$,
a fact we exploit in \Cref{sec:h2l2-diag,sec:param-io,sec:param-dyn}.
We will use the notation $(\cKr_i, \cFr_j, \cGr_k)$ to denote the \ac{strom}
specified by~\eqref{eq:rom} and~\eqref{eq:rom-p-sep-form}.

We assume $(\cKr_i, \cFr_j, \cGr_k)$ is \iac{diagstrom}, i.e.,
all $\cKr_i$'s are diagonal,
and in return so is $\Kr(\pp)$ in~\eqref{eq:rom}.
Then $\yr$ has a ``pole-residue'' form
\begin{equation}\label{eq:diag-pole-res}
  \yr(\pp)
  = \Gr(\pp) {\Kr(\pp)}^{-1} \Fr(\pp)
  = \sum_{\ell = 1}^{\nrom}
  \frac{g_{\ell}(\pp) f_{\ell}(\pp)\herm}{k_{\ell}(\pp)},
\end{equation}
where
$k_{\ell}(\pp)$ is the $\ell$th diagonal entry of $\Kr(\pp)$,
$f_{\ell}(\pp) = \Fr(\pp)\herm e_{\ell}$, and
$g_{\ell}(\pp) = \Gr(\pp) e_{\ell}$.
With this pole-residue form in hand,
we have the optimality conditions for \acp{diagstrom}
(Corollary~2.4 in~\cite{MliBG23}).
\begin{thm}\label{thm:l2-cond-diag}
  Suppose that
  $\pset \subseteq \CCpar$;
  $\measure$ is a measure over $\pset$;
  the function $\yf$ is in $\Ltwo(\pset, \measure; \CCoi)$;
  functions $\fundef{\ckr_i, \cfr_j, \cgr_k}{\pset}{\CC}$ are measurable and
  satisfy
  \begin{equation}\label{eq:abc-l2-bounded}
    \int_{\pset}
    \myparen*{
      \frac{
        \sum_{j = 1}^{\nFr} \abs*{\cfr_j(\pp)}
        \sum_{k = 1}^{\nGr} \abs*{\cgr_k(\pp)}
      }{
        \sum_{i = 1}^{\nKr} \abs*{\ckr_i(\pp)}
      }
    }^{2}
    \difm{\pp}
    < \infty,
  \end{equation}
  $\cKr_i \in \CCrr$,
  $\cFr_j \in \CCri$,
  $\cGr_k \in \CCor$; and
  \begin{equation}\label{eq:ainv-bounded}
    \esssup_{\pp \in \pset} \, \normF*{\ckr_i(\pp) {\Kr(\pp)}^{-1}}
    < \infty, \quad
    i = 1, 2, \ldots, \nKr,
  \end{equation}
  where $\Kr$ is as in~\eqref{eq:rom-p-sep-form}.
  Furthermore, let $(\cKr_i, \cFr_j, \cGr_k)$ be an $\Ltwo$-optimal
  \ac{diagstrom} of $\yf$ with $\yr$ as in~\eqref{eq:diag-pole-res}.
  Then
  \begin{subequations}\label{eq:l2-cond-diag}
    \begin{gather}
      \label{eq:l2-cond-diag-C}
      \int_{\pset}
      \frac{
        \overline{\cgr_k(\pp)}
        \yf(\pp)
        f_{\ell}(\pp)
      }{
        \overline{k_{\ell}(\pp)}
      }
      \difm{\pp}
      =
      \int_{\pset}
      \frac{
        \overline{\cgr_k(\pp)}
        \yr(\pp)
        f_{\ell}(\pp)
      }{
        \overline{k_{\ell}(\pp)}
      }
      \difm{\pp}, \\
      \label{eq:l2-cond-diag-B}
      \int_{\pset}
      \frac{
        \overline{\cfr_j(\pp)}
        g_{\ell}(\pp)\herm
        \yf(\pp)
      }{
        \overline{k_{\ell}(\pp)}
      }
      \difm{\pp}
      =
      \int_{\pset}
      \frac{
        \overline{\cfr_j(\pp)}
        g_{\ell}(\pp)\herm
        \yr(\pp)
      }{
        \overline{k_{\ell}(\pp)}
      }
      \difm{\pp}, \\
      \label{eq:l2-cond-diag-A}
      \begin{aligned}
         &
        \int_{\pset}
        \frac{
          \overline{\ckr_i(\pp)}
          g_{\ell}(\pp)\herm
          \yf(\pp)
          f_{\ell}(\pp)
        }{
          \overline{k_{\ell}(\pp)}^2
        }
        \difm{\pp} \\*
         & \quad =
        \int_{\pset}
        \frac{
          \overline{\ckr_i(\pp)}
          g_{\ell}(\pp)\herm
          \yr(\pp)
          f_{\ell}(\pp)
        }{
          \overline{k_{\ell}(\pp)}^2
        }
        \difm{\pp},
      \end{aligned}
    \end{gather}
  \end{subequations}
  for
  $i = 1, 2, \ldots, \nKr$,
  $j = 1, 2, \ldots, \nFr$,
  $k = 1, 2, \ldots, \nGr$, and
  $\ell = 1, 2, \ldots, \nrom$.
\end{thm}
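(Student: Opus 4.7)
The plan is to derive \eqref{eq:l2-cond-diag} as the first-order stationarity conditions for minimizing the squared error
\[
\objmse(\cKr_i, \cFr_j, \cGr_k) = \int_{\pset} \normF*{\yf(\pp) - \yr(\pp)}^2 \difm{\pp}
\]
viewed as a function of the free matrix parameters $\cKr_i$ (constrained to be diagonal), $\cFr_j$, and $\cGr_k$, with the real and imaginary parts of each entry treated as independent real variables. Hypothesis \eqref{eq:abc-l2-bounded} secures $\yr \in \Ltwo(\pset, \measure; \CCoi)$, while the essential-supremum bound \eqref{eq:ainv-bounded} on $\ckr_i(\pp) \Kr(\pp)^{-1}$ is the technical ingredient that, together with a dominated-convergence-type argument, justifies the exchange of differentiation and integration in each of the three matrix parameters.

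From $\objmse = \ipLtwo{\yf - \yr}{\yf - \yr}$, a standard calculation identifies the differential as twice the real part of $-\int_{\pset} \ipF{\yf(\pp) - \yr(\pp)}{d\yr(\pp)} \difm{\pp}$, so everything reduces to computing $d\yr$ for independent perturbations of each parameter family. Using the pole-residue form \eqref{eq:diag-pole-res}, a perturbation $\Delta \in \CCor$ of $\cGr_k$ changes only the residue columns via $dg_\ell(\pp) = \cgr_k(\pp)\, \Delta e_\ell$, leaving $f_\ell$ and $k_\ell$ untouched, so
\[
d\yr(\pp) = \sum_{\ell=1}^{\nrom} \frac{\cgr_k(\pp)}{k_\ell(\pp)}\, \Delta e_\ell\, f_\ell(\pp)\herm;
\]
an entirely analogous formula holds for $\cFr_j$. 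The diagonality of $\Kr(\pp)$ is the critical structural feature in the $\cKr_i$ case: perturbing the $\ell$th diagonal entry of $\cKr_i$ affects only the single pole $k_\ell$, and the chain rule picks up a factor $-1/k_\ell(\pp)^2$, which is exactly what produces the squared denominator $\overline{k_\ell(\pp)}^2$ appearing in \eqref{eq:l2-cond-diag-A}.

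Substituting each $d\yr$ into the variational identity and applying the trace-cyclic identity $\ipF{X}{\Delta e_\ell v\herm} = v\herm X\herm \Delta e_\ell$, requiring the resulting linear functional in $\Delta$ (respectively the scalar derivative in the $\cKr_i$ case) to vanish for every admissible perturbation collapses to a column-wise (respectively scalar) integrand identity; taking Hermitian conjugates and separating the $\yf$- and $\yr$-contributions then yields \eqref{eq:l2-cond-diag-C}--\eqref{eq:l2-cond-diag-A} for each admissible index tuple. The main non-routine step is the rigorous justification of differentiation under the integral sign — this is where \eqref{eq:abc-l2-bounded} and \eqref{eq:ainv-bounded} do the real work — together with the careful but mechanical bookkeeping needed to reconcile the sesquilinear Frobenius inner product with the real-variable parameterization of the complex matrices.
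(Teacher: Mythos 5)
Note first that the paper does not prove this theorem: it is imported verbatim as Corollary~2.4 of~\cite{MliBG23}, so there is no in-paper proof to compare against. Your derivation---reading \eqref{eq:l2-cond-diag} as the first-order stationarity conditions of the squared $\Ltwo$ error with respect to the diagonal entries of the $\cKr_i$ and the entries of $\cFr_j$, $\cGr_k$---is exactly the route taken in that cited line of work, and your three differentials of the pole-residue form \eqref{eq:diag-pole-res} produce the correct denominators and conjugations (in particular the squared $\overline{k_{\ell}(\pp)}^2$ from perturbing a diagonal entry of $\cKr_i$). The one step you leave genuinely unaddressed is why the perturbed model is even admissible: perturbing the $\ell$th diagonal entry of $\cKr_i$ by $\delta$ replaces $k_{\ell}(\pp)$ with $k_{\ell}(\pp)+\delta\,\ckr_i(\pp)$, and one must check this stays bounded away from zero relative to $k_{\ell}(\pp)$ almost everywhere for small $\delta$; that is precisely what \eqref{eq:ainv-bounded} buys you (it gives $\abs{\ckr_i(\pp)}/\abs{k_{\ell}(\pp)}\le M$ a.e., hence $\abs{1+\delta\,\ckr_i(\pp)/k_{\ell}(\pp)}\ge 1/2$ for $\abs{\delta}\le 1/(2M)$), and spelling it out would complete the dominated-convergence argument you invoke.
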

\Cref{thm:l2-cond-diag} establishes the interpolatory optimality
conditions~\eqref{eq:l2-cond-diag} for $\Ltwo$-optimal approximation.
We showed in~\cite{MliBG23} that various structured reduced-order modeling
problems appear as a special case of \Cref{thm:l2-cond-diag} and
derived interpolatory optimality conditions for important classes of
\emph{non-parametric} structured \ac{lti} systems.
In this paper, we extend this analysis to parametric \ac{lti} systems.

\section{\texorpdfstring{$\HtwoLtwo$}{H2xL2}-optimal Parametric Interpolation}%
\label{sec:h2l2-diag}

Here we use Theorem~\ref{thm:l2-cond-diag} to derive interpolatory conditions for
$\HtwoLtwo$-optimal reduced-order approximation of the \ac{fom}~\eqref{eq:p-fom}
using \acp{diagstrom}.
But first we need to establish what the assumptions~\eqref{eq:abc-l2-bounded}
and~\eqref{eq:ainv-bounded} appearing in $\Ltwo$-optimal approximation for
the structure~\eqref{eq:rom-p-sep-form} correspond to in the case of
$\HtwoLtwo$ approximation with the structures of \acp{strom}
in~\eqref{eq:lti-p-sep-form}.
\begin{lem}\label{lem:assumptions}
  Let $\pp = (s, \ppp)$, $\pset = \imag \RR \times \psetp$, and
  $\measure = \frac{1}{2 \pi} \lambda_{\imag \RR} \times \measurep$
  where $\lambda_{\imag \RR}$ is the Lebesgue measure over $\imag \RR$ and
  $\measurep$ is a measure over $\psetp \subseteq \CCparp$.
  Furthermore, for \ac{strom} in~\eqref{eq:lti-p-sep-form},
  let the functions
  $\fundef{\cer_{\ell}, \car_i, \cbr_j, \ccr_k}{\psetp}{\CC}$ be measurable.
  Then the condition
  \begin{equation}\label{eq:eabc-l2-bounded}
    \int_{\psetp}
    \frac{
      \myparen*{
        \sum_{j = 1}^{\nBr} \abs*{\cbr_j(\ppp)}
        \sum_{k = 1}^{\nCr} \abs*{\ccr_k(\ppp)}
      }^{2}
    }{
      \sum_{\ell = 1}^{\nEr} \abs*{\cer_{\ell}(\ppp)}
      \sum_{i = 1}^{\nAr} \abs*{\car_i(\ppp)}
    }
    \difmp{\ppp}
    < \infty \\
  \end{equation}
  is equivalent to~\eqref{eq:abc-l2-bounded}, and the conditions
  \begin{subequations}\label{eq:linf-bounded}
    \begin{align}
      \label{eq:e-linf-bounded}
      \esssup_{\ppp \in \psetp} \,
      \abs*{\cer_{\ell}(\ppp)}
      \normLinf*{s \myparen*{s \Er(\ppp) - \Ar(\ppp)}^{-1}}
       & < \infty, \\
      \label{eq:a-linf-bounded}
      \esssup_{\ppp \in \psetp} \,
      \abs*{\car_i(\ppp)}
      \normLinf*{\myparen*{s \Er(\ppp) - \Ar(\ppp)}^{-1}}
       & < \infty,
    \end{align}
  \end{subequations}
  for $\ell = 1, \ldots, \nEr$ and $i = 1, \ldots, \nAr$,
  are equivalent to~\eqref{eq:ainv-bounded}.
\end{lem}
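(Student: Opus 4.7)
The plan is to exploit the direct correspondence between the general $\Ltwo$-framework of~\Cref{thm:l2-cond-diag} and the $\HtwoLtwo$ setting for parametric \ac{lti} systems. With $\pp = (s, \ppp)$ and $\measure = \frac{1}{2\pi} \lambda_{\imag \RR} \times \measurep$, the role of $\Kr(\pp)$ in~\eqref{eq:rom-p-sep-form} is played by $s\Er(\ppp) - \Ar(\ppp)$, which expands as $\sum_{\ell} \myparen*{s \cer_{\ell}(\ppp)} \cEr_{\ell} + \sum_{i} \myparen*{-\car_i(\ppp)} \cAr_i$. Thus, the general $\ckr_i$ coefficients correspond to the $\nEr + \nAr$ functions $s\cer_{\ell}(\ppp)$ and $-\car_i(\ppp)$, while $\cfr_j$ and $\cgr_k$ map directly to $\cbr_j(\ppp)$ and $\ccr_k(\ppp)$. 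Both claims then reduce to elementary computations on the product space.

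For the first equivalence, I would substitute the above into~\eqref{eq:abc-l2-bounded}, apply Fubini, and evaluate the inner integral over $s = \imag \omega$ explicitly. Writing $a(\ppp) = \sum_{\ell} \abs*{\cer_{\ell}(\ppp)}$ and $b(\ppp) = \sum_i \abs*{\car_i(\ppp)}$, the denominator $\sum_i \abs*{\ckr_i(\pp)}$ equals $\abs*{\omega} a(\ppp) + b(\ppp)$, and a rescaling $u = \omega\, a/b$ yields
\begin{equation*}
  \int_{-\infty}^{\infty}
  \frac{\dd \omega}{\myparen*{\abs*{\omega} a(\ppp) + b(\ppp)}^{2}}
  =
  \frac{2}{a(\ppp) b(\ppp)}.
\end{equation*}
Hence~\eqref{eq:abc-l2-bounded} equals $\frac{1}{\pi}$ times the integral in~\eqref{eq:eabc-l2-bounded}, establishing their equivalence. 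Degenerate cases where $a(\ppp)$ or $b(\ppp)$ vanishes on a set of positive $\measurep$-measure make both quantities infinite simultaneously, so the equivalence is preserved.

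For the second equivalence, I would exploit that the essential supremum over a product measure factors when one factor depends on only one variable. For each $\cKr_i$ associated with an $\Ar$-term, the coefficient $-\car_i(\ppp)$ is independent of $s$, so
\begin{equation*}
  \esssup_{(s,\ppp)} \,
  \abs*{\car_i(\ppp)}
  \normF*{\myparen*{s\Er(\ppp) - \Ar(\ppp)}^{-1}}
  =
  \esssup_{\ppp \in \psetp} \,
  \abs*{\car_i(\ppp)}
  \normLinf*{\myparen*{s\Er(\ppp) - \Ar(\ppp)}^{-1}},
\end{equation*}
which is~\eqref{eq:a-linf-bounded}. For each $\cKr_i$ coming from an $\Er$-term, the coefficient $s\cer_{\ell}(\ppp)$ separates as a product of a function of $s$ and a function of $\ppp$, and pulling the $\abs*{s}$ factor inside the $\Linf$ norm gives~\eqref{eq:e-linf-bounded}.

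The main obstacle is not conceptual but rather bookkeeping: one must verify measurability so that Fubini applies to the first part, and carefully justify the factorization of the essential supremum in the second. Both steps are standard once the correspondence between the general $\ckr_i$ coefficients and the LTI coefficients $s\cer_{\ell}(\ppp), -\car_i(\ppp)$ is made explicit.
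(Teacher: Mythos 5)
Your proposal is correct and follows essentially the same route as the paper: identify the coefficients of $s\Er(\ppp)-\Ar(\ppp)$ as $s\cer_{\ell}(\ppp)$ and $-\car_i(\ppp)$, evaluate the inner $\omega$-integral via $\int_{-\infty}^{\infty}(a\abs{\omega}+b)^{-2}\dif{\omega}=2/(ab)$, and factor the iterated essential supremum over the product measure. Your additional remarks on the degenerate case $a(\ppp)b(\ppp)=0$ and on measurability are sound refinements of details the paper leaves implicit.
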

\begin{proof}
  First note that with the choices of
  $\pp = (s, \ppp)$,
  $\pset = \imag \RR \times \psetp$, and
  $\measure = \frac{1}{2 \pi} \lambda_{\imag \RR} \times \measurep$,
  the $\Ltwo$-norm in~\eqref{eq:l2-norm}
  recovers the $\HtwoLtwo$norm in~\eqref{eq:h2l2-norm}.
  Now note that the integral in~\eqref{eq:abc-l2-bounded},
  for the \ac{strom} $(s \Er(\ppp) - \Ar(\ppp), \Br(\ppp), \Cr(\ppp))$
  as in~\eqref{eq:lti-p-sep-form},
  takes the form
  \begin{equation*}
    \int_{\psetp}
    \int_{-\infty}^{\infty}
    \myparen*{
      \frac{
        \sum_{j = 1}^{\nBr} \abs*{\cbr_j(\ppp)}
        \sum_{k = 1}^{\nCr} \abs*{\ccr_k(\ppp)}
      }{
        \abs{\omega} \sum_{\ell = 1}^{\nEr} \abs*{\cer_{\ell}(\ppp)}
        + \sum_{i = 1}^{\nAr} \abs*{\car_i(\ppp)}
      }
    }^{2}
    \dif{\omega}
    \difmp{\ppp}.
  \end{equation*}
  Using that
  $\int_{-\infty}^{\infty} \frac{\dif{x}}{{(a \abs{x} + b)}^2} = \frac{2}{a b}$
  for positive $a$ and $b$,
  the above integral becomes equal to the one in~\eqref{eq:eabc-l2-bounded},
  up to scaling by $2$.

  Next, the conditions in~\eqref{eq:ainv-bounded} become
  \begin{align*}
    \esssup_{\ppp \in \psetp} \,
    \esssup_{\omega \in \RR} \,
    \normF*{
      \imag \omega
      \cer_{\ell}(\ppp)
      \myparen*{\imag \omega \Er(\ppp) - \Ar(\ppp)}^{-1}
    }
     & < \infty, \\
    \esssup_{\ppp \in \psetp} \,
    \esssup_{\omega \in \RR} \,
    \normF*{
      \car_i(\ppp)
      \myparen*{\imag \omega \Er(\ppp) - \Ar(\ppp)}^{-1}
    }
     & < \infty,
  \end{align*}
  which simplify to
  \begin{align*}
    \esssup_{\ppp \in \psetp} \,
    \abs*{\cer_{\ell}(\ppp)}
    \esssup_{s \in \imag \RR} \,
    \normF*{s \myparen*{s \Er(\ppp) - \Ar(\ppp)}^{-1}}
     & < \infty, \\
    \esssup_{\ppp \in \psetp} \,
    \abs*{\car_i(\ppp)}
    \esssup_{s \in \imag \RR} \,
    \normF*{\myparen*{s \Er(\ppp) - \Ar(\ppp)}^{-1}}
     & < \infty.
  \end{align*}
  Since $\normF{\cdot}$ and $\normtwo{\cdot}$ are equivalent norms,
  the above conditions are equivalent to~\eqref{eq:linf-bounded}.
\end{proof}
The work~\cite{HunMMS22} used the assumptions that
$\psetp \subset \RRparp$ is compact,
$\measurep$ is a finite Borel measure over $\psetp$,
$\fundef{\cer_{\ell}, \car_i, \cbr_j, \ccr_k}{\psetp}{\RR}$ are continuous,
$\Er(\ppp)$ is invertible and
${\Er(\ppp)}^{-1} \Ar(\ppp)$ has all eigenvalues in the open left half-plane
for all $\ppp \in \psetp$.
Therefore, we see that the assumptions of the earlier work~\cite{HunMMS22} on
$\HtwoLtwo$ approximation are indeed a special case of the ones we derived in
Lemma~\ref{lem:assumptions}.

Now that we established the assumptions of Theorem~\ref{thm:l2-cond-diag} for
parametric \ac{lti} systems,
we are ready to derive the corresponding interpolatory $\HtwoLtwo$-optimality
conditions based on the conditions~\eqref{eq:l2-cond-diag}.
We use $\Er(\ppp) = I$ and all $\hA_i$ being diagonal.
(The result can be extended to parametric diagonal $\Er$;
only the expressions become more involved.)
\begin{thm}\label{thm:h2l2-cond-diag}
  Given the full-order parametric transfer function $H$,
  let $\hH$ in~\eqref{eq:p-rom-tf} be an $\HtwoLtwo$-optimal \ac{diagstrom}
  for $H$ with $\Er(\ppp) = I$ and all $\hA_i$ being diagonal
  in~\eqref{eq:lti-p-sep-form-a}.
  Let $\lambda_\ell(\ppp)$ denote the $\ell$th diagonal entry of $\Ar(\ppp)$.
  Moreover, define
  $c_{\ell}(\ppp) = \Cr(\ppp) e_{\ell}$ and
  $b_{\ell}(\ppp) = \Br(\ppp)\herm e_{\ell}$,
  where $\Br(\ppp)$ and $\Cr(\ppp)$ are as defined
  in~\eqref{eq:lti-p-sep-form-b}.
  Then
  \begin{equation}\label{eq:param-pole-res}
    \hH(s, \ppp)
    = \sum_{\ell = 1}^{\nrom}
    \frac{c_{\ell}(\ppp) b_{\ell}(\ppp)\herm}{s - \lambda_{\ell}(\ppp)}
  \end{equation}
  and
  \begin{subequations}\label{eq:h2l2-cond-diag}
    \begin{gather}
      \label{eq:h2l2-cond-diag-a}
      \begin{aligned}
         &
        \int_{\psetp}
        \ccr_k(\ppp)
        H\myparen*{-\overline{\lambda_\ell(\ppp)}, \ppp}
        b_\ell(\ppp)
        \difmp{\ppp} \\*
         & =
        \int_{\psetp}
        \ccr_k(\ppp)
        \hH\myparen*{-\overline{\lambda_\ell(\ppp)}, \ppp}
        b_\ell(\ppp)
        \difmp{\ppp},
      \end{aligned} \\
      \label{eq:h2l2-cond-diag-b}
      \begin{aligned}
         &
        \int_{\psetp}
        \cbr_j(\ppp)
        c_\ell(\ppp)\herm
        H\myparen*{-\overline{\lambda_\ell(\ppp)}, \ppp}
        \difmp{\ppp} \\*
         & =
        \int_{\psetp}
        \cbr_j(\ppp)
        c_\ell(\ppp)\herm
        \hH\myparen*{-\overline{\lambda_\ell(\ppp)}, \ppp}
        \difmp{\ppp},
      \end{aligned} \\
      \label{eq:h2l2-cond-diag-c}
      \begin{aligned}
         &
        \int_{\psetp}
        \car_i(\ppp)
        c_\ell(\ppp)\herm
        \frac{\partial H}{\partial s}
        \myparen*{-\overline{\lambda_\ell(\ppp)}, \ppp}
        b_\ell(\ppp)
        \difmp{\ppp} \\*
         & =
        \int_{\psetp}
        \car_i(\ppp)
        c_\ell(\ppp)\herm
        \frac{\partial \hH}{\partial s}
        \myparen*{-\overline{\lambda_\ell(\ppp)}, \ppp}
        b_\ell(\ppp)
        \difmp{\ppp},
      \end{aligned}
    \end{gather}
  \end{subequations}
  for $\ell = 1, 2, \ldots, \nrom$,
  $k = 1, 2, \ldots, \nCr$,
  $j = 1, 2, \ldots, \nBr$,
  $i = 1, 2, \ldots \nAr$,
  where $\car_i$, $\cbr_j$, and $\ccr_k$ are as defined
  in~\eqref{eq:lti-p-sep-form}.
\end{thm}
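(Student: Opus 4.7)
The plan is to specialize Theorem~\ref{thm:l2-cond-diag} to the $\HtwoLtwo$ parametric LTI setting using the correspondence set up in Lemma~\ref{lem:assumptions}, and then to evaluate the inner frequency integrals in closed form. First, taking $\pp = (s, \ppp)$, $\pset = \imag \RR \times \psetp$, and $\measure = \tfrac{1}{2\pi} \lambda_{\imag\RR} \times \measurep$ turns the $\Ltwo$ norm in~\eqref{eq:l2-norm} into the $\HtwoLtwo$ norm in~\eqref{eq:h2l2-norm}; by Lemma~\ref{lem:assumptions}, the abstract boundedness hypotheses of Theorem~\ref{thm:l2-cond-diag} translate into the $\HtwoLtwo$-side hypotheses. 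Under this identification, $\Kr(\pp) = sI - \Ar(\ppp)$, $\Fr(\pp) = \Br(\ppp)$, and $\Gr(\pp) = \Cr(\ppp)$; the parameter-separable form~\eqref{eq:rom-p-sep-form} matches~\eqref{eq:lti-p-sep-form} with $\ckr_i(\pp) = -\car_i(\ppp)$ (the fixed $sI$ contribution is not an optimization variable, since $\Er \equiv I$), $\cfr_j(\pp) = \cbr_j(\ppp)$, and $\cgr_k(\pp) = \ccr_k(\ppp)$. Since every $\cAr_i$ is diagonal, $\Kr(\pp)$ is itself diagonal with entry $k_\ell(\pp) = s - \lambda_\ell(\ppp)$, and~\eqref{eq:diag-pole-res} immediately produces the pole-residue expansion~\eqref{eq:param-pole-res}.

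Next, I substitute these identifications into~\eqref{eq:l2-cond-diag-C}, \eqref{eq:l2-cond-diag-B}, and~\eqref{eq:l2-cond-diag-A}. All $\ppp$-dependent factors pull outside the $\omega$-integral, so each condition reduces to a relation of the form $\int_{\psetp} (\text{$\ppp$-factor}) \, I_\ell^{(m)}(\ppp) \difmp{\ppp} = \int_{\psetp} (\text{$\ppp$-factor}) \, \hat I_\ell^{(m)}(\ppp) \difmp{\ppp}$, where $I_\ell^{(m)}(\ppp) = \tfrac{1}{2\pi} \int_{-\infty}^{\infty} H(\imag\omega,\ppp)/\overline{\imag\omega - \lambda_\ell(\ppp)}^{m} \dif\omega$, with $m = 1$ for the first two conditions and $m = 2$ for the third. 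Using $\overline{\imag\omega - \lambda_\ell(\ppp)} = -(\imag\omega + \overline{\lambda_\ell(\ppp)})$ and closing the contour in the right half-plane---where the kernel's only pole sits, at $s = -\overline{\lambda_\ell(\ppp)}$---the residue theorem evaluates $I_\ell^{(1)}(\ppp) = H(-\overline{\lambda_\ell(\ppp)},\ppp)$ and $I_\ell^{(2)}(\ppp) = -\tfrac{\partial H}{\partial s}(-\overline{\lambda_\ell(\ppp)},\ppp)$, and analogously for $\hH$. Matching both sides produces~\eqref{eq:h2l2-cond-diag-a}, \eqref{eq:h2l2-cond-diag-b}, and~\eqref{eq:h2l2-cond-diag-c}; in the $\cAr_i$ condition, the extra sign from the double-pole residue cancels against the sign in $\ckr_i = -\car_i$.

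The main technical care required is the sign and conjugation bookkeeping: on the imaginary axis, complex conjugation acts as $s \mapsto -s$, so both $\overline{k_\ell(\pp)}$ and the coefficient $\ckr_i = -\car_i$ contribute signs that must be tracked and cancelled correctly, particularly in deriving~\eqref{eq:h2l2-cond-diag-c}. The vanishing of the large semicircle in the right half-plane, needed to justify the contour closure, follows from the strict properness of $H(\cdot,\ppp)$ and $\hH(\cdot,\ppp)$ together with the $\Linf$-type bounds~\eqref{eq:linf-bounded} supplied by Lemma~\ref{lem:assumptions}; implicit here is that the ROM poles $\lambda_\ell(\ppp)$ lie in the open left half-plane $\measurep$-almost everywhere, so that $-\overline{\lambda_\ell(\ppp)}$ lies in the interior of the right half-plane and $H(\cdot,\ppp)$ is analytic there, making it a valid interpolation point.
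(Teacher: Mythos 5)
Your proposal is correct and follows essentially the same route as the paper: specialize \Cref{thm:l2-cond-diag} via the identification $\pp=(s,\ppp)$, $\Kr = sI-\Ar(\ppp)$, $\Fr=\Br(\ppp)$, $\Gr=\Cr(\ppp)$ from \Cref{lem:assumptions}, read off the pole--residue form from~\eqref{eq:diag-pole-res}, and evaluate the inner frequency integrals by closing the contour around the simple (resp.\ double) pole at $s=-\overline{\lambda_\ell(\ppp)}$ in the right half-plane. Your extra bookkeeping (the sign from $\ckr_i=-\car_i$ cancelling the sign of the double-pole residue, and the justification of the contour closure) only makes explicit what the paper leaves implicit in its ``follow similarly'' step.
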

\begin{proof}
  The pole-residue form~\eqref{eq:param-pole-res} follows
  from the general diagonal pole-residue form~\eqref{eq:diag-pole-res},
  with
  $k_{\ell}(s, \ppp) = s - \lambda_{\ell}(\ppp)$,
  $f_{\ell}(s, \ppp) = b_{\ell}(\ppp)$, and
  $g_{\ell}(s, \ppp) = c_{\ell}(\ppp)$.
  Then, with this structure, the optimality conditions~\eqref{eq:h2l2-cond-diag}
  follow from diagonal conditions after applying the Cauchy integral formula.
  For instance, the left-hand side of the right tangential Lagrange
  condition~\eqref{eq:l2-cond-diag-C} becomes
  \begin{align*}
     &
    \int_{\pset}
    \frac{
      \overline{\ccr_k(\pp)}
      \yf(\pp)
      b_{\ell}(\pp)
    }{
      \overline{a_{\ell}(\pp)}
    }
    \difm{\pp}   \\
     & =
    \int_{\psetp}
    \int_{-\infty}^{\infty}
    \frac{
      \ccr_k(\ppp)
      \yf(\imag \omega, \ppp)
      b_{\ell}(\ppp)
    }{
      \overline{\imag \omega - \lambda_{\ell}(\ppp)}
    }
    \dif{\omega}
    \difmp{\ppp} \\
     & =
    \int_{\psetp}
    \int_{-\infty}^{\infty}
    \frac{
      \ccr_k(\ppp)
      \yf(\imag \omega, \ppp)
      b_{\ell}(\ppp)
    }{
      -\imag \omega - \overline{\lambda_{\ell}(\ppp)}
    }
    \dif{\omega}
    \difmp{\ppp} \\
     & =
    \frac{1}{\imag}
    \int_{\psetp}
    \oint_{\imag \RR}
    \frac{
      \ccr_k(\ppp)
      \yf(s, \ppp)
      b_{\ell}(\ppp)
    }{
      -s - \overline{\lambda_{\ell}(\ppp)}
    }
    \dif{s}
    \difmp{\ppp} \\
     & =
    \frac{2 \pi}{\imag}
    \int_{\psetp}
    \ccr_k(\ppp)
    \yf\myparen*{-\overline{\lambda_{\ell}(\ppp)}, \ppp}
    b_{\ell}(\ppp)
    \difmp{\ppp},
  \end{align*}
  which yields~\eqref{eq:h2l2-cond-diag-a}.
  The remaining two
  conditions~\eqref{eq:h2l2-cond-diag-b}--\eqref{eq:h2l2-cond-diag-c}
  follow similarly from~\eqref{eq:l2-cond-diag-B} and~\eqref{eq:l2-cond-diag-A}.
\end{proof}
Recall that $\Htwo$-optimal approximation of \emph{non-parametric} \ac{lti}
systems requires bitangential Hermite interpolation of the \ac{fom} transfer
function $H$ at the mirror images of the reduced-order
poles~\cite{GugAB08,AntBG20}.
We showed in our earlier works~\cite{MliG23a,MliG23b,MliBG23} that bitangential
Hermite interpolation as necessary conditions for optimality extend to many
other $\Htwo$/$\Ltwo$ approximation settings as well.
Even though the $\HtwoLtwo$ optimality conditions~\eqref{eq:h2l2-cond-diag}
derived here have an integral form,
they still have the similar bitangential Hermite interpolation structure as
before.
To arrive at this more familiar form of bitangential Hermite interpolations,
we need to have explicit expressions for the functions
$\car_i, \cbr_j, \ccr_k, \lambda_{\ell}, b_{\ell}, c_{\ell}$.
In the next two sections we focus on such cases.

\section{Parameters in Inputs and Outputs}%
\label{sec:param-io}

The work~\cite{BauBBG11} considered parametric \ac{lti} systems with parameters
only in $\Br$ and $\Cr$;
specifically, the \ac{rom} of the form
\begin{equation}\label{eq:io-ss-rom}
  \Er(\ppp) = I, \ \
  \Ar(\ppp) = \cAr, \ \
  \Br(\ppp) = \cBr_1 + \ppp_1 \cBr_2, \ \
  \Cr(\ppp) = \cCr_1 + \ppp_2 \cCr_2,
\end{equation}
with $\cAr = \mydiag{\lambda_1, \lambda_2, \ldots, \lambda_{\nrom}}$ and
$\psetp = {[0, 1]}^2$.
Therefore,
\begin{gather*}
  \ppp = (\ppp_1, \ppp_2), \
  \nAr = 1, \
  \car_1(\ppp) = 1, \\
  \nBr = 2, \
  \cbr_1(\ppp) = 1, \
  \cbr_2(\ppp) = \ppp_1, \\
  \nCr = 2, \
  \ccr_1(\ppp) = 1, \
  \ccr_2(\ppp) = \ppp_2, \\
  \lambda_{\ell}(\ppp) = \lambda_{\ell}, \
  b_{\ell}(\ppp) = b_{\ell, 1} + \ppp_1 b_{\ell, 2}, \
  c_{\ell}(\ppp) = c_{\ell, 1} + \ppp_2 c_{\ell, 2},
\end{gather*}
where $b_{\ell, i} = \cBr_i\herm e_{\ell}$ and $c_{\ell, i} = \cCr_i e_{\ell}$
for $i = 1, 2$.
In~\cite{BauBBG11}, only \ac{siso} systems were considered.
Here we consider \ac{mimo} systems.
(Further extensions are possible,
see, e.g.,~\cite{KleGW+20}.)

Note that the reduced transfer function is bilinear
in terms of the parameters $\ppp_1$ and $\ppp_2$:
\begin{equation}\label{eq:io-tf-rom}
  \begin{aligned}
    \hH(s, \ppp)
     & =
    \myparen*{\cCr_1 + \ppp_2 \cCr_2}
    \myparen*{s I - \cAr}^{-1}
    \myparen*{\cBr_1 + \ppp_1 \cBr_2} \\*
     & =
    \hH_{11}(s)
    + \ppp_1 \hH_{12}(s)
    + \ppp_2 \hH_{21}(s)
    + \ppp_1 \ppp_2 \hH_{22}(s)
  \end{aligned}
\end{equation}
where
\begin{equation*}
  \hH_{ij}(s) = \cCr_i \myparen*{s I - \cAr}^{-1} \cBr_j, \quad
  i, j \in \{1, 2\}.
\end{equation*}
We assume the same form for the full transfer function
\begin{equation}\label{eq:io-tf-fom}
  H(s, \ppp)
  =
  H_{11}(s)
  + \ppp_1 H_{12}(s)
  + \ppp_2 H_{21}(s)
  + \ppp_1 \ppp_2 H_{22}(s),
\end{equation}
where $H_{ij} \in \Htwo$ for $i, j \in \{1, 2\}$.
However, contrary to~\cite{BauBBG11},
we do not need to assume that $H_{ij}$ has a finite-dimensional state space,
i.e., they can contain non-rational terms.
We only require the \ac{rom} to have a finite-dimensional state space.
Thus, the theory we develop applies not only to \ac{mimo} systems
but also to irrational transfer functions.

Following~\cite{BauBBG11}, we define the auxiliary transfer functions
\begin{subequations}
  \begin{align}
    \label{eq:io-tf-fom-aux}
    \cH(s)
     & =
    \begin{bmatrix}
      H_{11}(s) & H_{12}(s) \\
      H_{21}(s) & H_{22}(s)
    \end{bmatrix}\!, \\
    \label{eq:io-tf-rom-aux}
    \hcH(s)
     & =
    \begin{bmatrix}
      \hH_{11}(s) & \hH_{12}(s) \\
      \hH_{21}(s) & \hH_{22}(s)
    \end{bmatrix}
    =
    \begin{bmatrix}
      \cCr_1 \\
      \cCr_2
    \end{bmatrix}
    \myparen*{s I - \cAr}^{-1}
    \begin{bmatrix}
      \cBr_1 & \cBr_2
    \end{bmatrix}\!.
  \end{align}
\end{subequations}
Note that $H$ and $\hH$ can be obtained from $\cH$ and $\hcH$ via
\begin{subequations}\label{eq:io-aux-revert}
  \begin{align}\label{eq:io-aux-revert-fom}
    H(s, \ppp)
     & =
    \begin{bmatrix}
      I_{\nout} & \ppp_2 I_{\nout}
    \end{bmatrix}
    \cH(s)
    \begin{bmatrix}
      I_{\nin} \\
      \ppp_1 I_{\nin}
    \end{bmatrix}\!, \\
    \hH(s, \ppp)
     & =
    \begin{bmatrix}
      I_{\nout} & \ppp_2 I_{\nout}
    \end{bmatrix}
    \hcH(s)
    \begin{bmatrix}
      I_{\nin} \\
      \ppp_1 I_{\nin}
    \end{bmatrix}\!.
  \end{align}
\end{subequations}
We obtain the following result,
were we take $\measurep$ to be the Lebesgue measure over $\psetp$
(as in~\cite{BauBBG11}).
\begin{thm}\label{thm:h2l2-cond-io-aux}
  Let $H, \hH$ be as in~\eqref{eq:io-tf-fom} and~\eqref{eq:io-ss-rom} and
  $\cH, \hcH$ as in~\eqref{eq:io-tf-fom-aux} and~\eqref{eq:io-tf-rom-aux}.
  Furthermore, let $\hH$ be an $\HtwoLtwo$-optimal \ac{rom} for $H$.
  Define
  \begin{equation}\label{eq:io-aux-b-c}
    \mathfrak{b}_{\ell}
    =
    \begin{bmatrix}
      I_{\nin}             & \frac{1}{2} I_{\nin} \\
      \frac{1}{2} I_{\nin} & \frac{1}{3} I_{\nin}
    \end{bmatrix}
    \begin{bmatrix}
      b_{\ell, 1} \\
      b_{\ell, 2}
    \end{bmatrix},\,
    \mathfrak{c}_{\ell}
    =
    \begin{bmatrix}
      I_{\nout}             & \frac{1}{2} I_{\nout} \\
      \frac{1}{2} I_{\nout} & \frac{1}{3} I_{\nout}
    \end{bmatrix}
    \begin{bmatrix}
      c_{\ell, 1} \\
      c_{\ell, 2}
    \end{bmatrix}\!.
  \end{equation}
  Then for $\ell = 1, 2, \ldots, \nrom$, we have
  \begin{subequations}\label{eq:io-cond}
    \begin{align}
      \label{eq:io-cond-1}
      \cH\myparen*{-\overline{\lambda_{\ell}}}
      \mathfrak{b}_{\ell}
       & =
      \hcH\myparen*{-\overline{\lambda_{\ell}}}
      \mathfrak{b}_{\ell},                       \\
      \label{eq:io-cond-2}
      \mathfrak{c}_{\ell}\herm
      \cH\myparen*{-\overline{\lambda_{\ell}}}
       & =
      \mathfrak{c}_{\ell}\herm
      \hcH\myparen*{-\overline{\lambda_{\ell}}}, \\
      \label{eq:io-cond-3}
      \mathfrak{c}_{\ell}\herm
      \cH'\myparen*{-\overline{\lambda_{\ell}}}
      \mathfrak{b}_{\ell}
       & =
      \mathfrak{c}_{\ell}\herm
      \hcH'\myparen*{-\overline{\lambda_{\ell}}}
      \mathfrak{b}_{\ell}.
    \end{align}
  \end{subequations}
\end{thm}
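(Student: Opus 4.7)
The plan is to specialize the general optimality conditions of \Cref{thm:h2l2-cond-diag} to the bilinear structure~\eqref{eq:io-ss-rom}, and then to rewrite every parameter integral in block form using the auxiliary transfer functions $\cH$ and $\hcH$ of~\eqref{eq:io-tf-fom-aux}-\eqref{eq:io-tf-rom-aux}. First, I would substitute $\car_1 \equiv 1$, $\cbr_1 \equiv 1$, $\cbr_2(\ppp) = \ppp_1$, $\ccr_1 \equiv 1$, $\ccr_2(\ppp) = \ppp_2$, the parameter-independent poles $\lambda_{\ell}(\ppp) = \lambda_{\ell}$, and the affine residues $b_{\ell}(\ppp) = b_{\ell,1} + \ppp_1 b_{\ell,2}$, $c_{\ell}(\ppp) = c_{\ell,1} + \ppp_2 c_{\ell,2}$ into~\eqref{eq:h2l2-cond-diag-a}-\eqref{eq:h2l2-cond-diag-c}. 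Since $\lambda_{\ell}$ is independent of $\ppp$, the point $-\overline{\lambda_{\ell}}$ can be pulled out of the integral, and each integrand becomes a polynomial in $(\ppp_1, \ppp_2)$ multiplied by $H(-\overline{\lambda_{\ell}}, \ppp)$ (or its $s$-derivative).

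The next step is to use~\eqref{eq:io-aux-revert-fom} to rewrite $H(-\overline{\lambda_{\ell}}, \ppp)$ as $\begin{bmatrix} I & \ppp_2 I \end{bmatrix} \cH(-\overline{\lambda_{\ell}}) \begin{bmatrix} I \\ \ppp_1 I \end{bmatrix}$, and analogously for $\hH$. Since $\psetp = [0,1]^2$ factors and $\measurep$ is Lebesgue, every integral reduces to the scalar identity
\begin{equation*}
  \int_0^1 \begin{bmatrix} 1 \\ \ppp \end{bmatrix} \begin{bmatrix} 1 & \ppp \end{bmatrix} d\ppp
  = \begin{bmatrix} 1 & 1/2 \\ 1/2 & 1/3 \end{bmatrix},
\end{equation*}
which, tensored with $I_{\nin}$ or $I_{\nout}$, reproduces precisely the Hilbert-type block matrix appearing in~\eqref{eq:io-aux-b-c}; denote the two block versions by $\mathfrak{C}_i$ and $\mathfrak{C}_o$. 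A direct calculation then shows that stacking the $k = 1, 2$ instances of~\eqref{eq:h2l2-cond-diag-a} vertically yields
\begin{equation*}
  \mathfrak{C}_o \, \cH(-\overline{\lambda_{\ell}}) \, \mathfrak{b}_{\ell}
  = \mathfrak{C}_o \, \hcH(-\overline{\lambda_{\ell}}) \, \mathfrak{b}_{\ell},
\end{equation*}
stacking the $j = 1, 2$ instances of~\eqref{eq:h2l2-cond-diag-b} horizontally yields
\begin{equation*}
  \mathfrak{c}_{\ell}\herm \, \cH(-\overline{\lambda_{\ell}}) \, \mathfrak{C}_i
  = \mathfrak{c}_{\ell}\herm \, \hcH(-\overline{\lambda_{\ell}}) \, \mathfrak{C}_i,
\end{equation*}
and~\eqref{eq:h2l2-cond-diag-c} (a single condition, as $\nAr = 1$) gives directly~\eqref{eq:io-cond-3}.

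To conclude, observe that the $2 \times 2$ Hilbert matrix above has determinant $1/12 \neq 0$, so both $\mathfrak{C}_o$ and $\mathfrak{C}_i$ are invertible. Cancelling them from the two sandwiched identities delivers~\eqref{eq:io-cond-1} and~\eqref{eq:io-cond-2}. The main obstacle will be the careful bookkeeping of the $(\ppp_1, \ppp_2)$-polynomial factors: one has to collect $c_{\ell}(\ppp)\herm$ and $b_{\ell}(\ppp)$ together with the row $\begin{bmatrix} I & \ppp_2 I \end{bmatrix}$ and the column $\begin{bmatrix} I \\ \ppp_1 I \end{bmatrix}$ into clean outer-product blocks, so that the parameter integrals factor along the Kronecker structure of $\psetp$ and align exactly with the definitions of $\mathfrak{b}_{\ell}$ and $\mathfrak{c}_{\ell}$ in~\eqref{eq:io-aux-b-c}.
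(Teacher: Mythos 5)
Your proposal is correct and follows essentially the same route as the paper's proof: specialize Theorem~\ref{thm:h2l2-cond-diag} to the bilinear structure~\eqref{eq:io-ss-rom}, evaluate the $[0,1]^2$ parameter integrals to produce the block Hilbert-type matrices, stack the $k$- and $j$-indexed conditions, and read off~\eqref{eq:io-cond}. The only difference is cosmetic: you make explicit the outer-product factorization $\int_0^1 [1;\ppp][1,\ppp]\,\mathrm{d}\ppp$ and the invertibility ($\det = 1/12$) needed to cancel the block matrices from~\eqref{eq:io-cond-1}--\eqref{eq:io-cond-2}, steps the paper carries out by direct term-by-term expansion and leaves implicit.
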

\begin{proof}
  Based on the conditions in~\eqref{eq:h2l2-cond-diag},
  we need to compute the integrals
  \begin{gather*}
    \int_{\psetp}
    H\myparen*{-\overline{\lambda_{\ell}}, \ppp}
    b_\ell(\ppp)
    \dif{\ppp},\
    \int_{\psetp}
    \ppp_2
    H\myparen*{-\overline{\lambda_{\ell}}, \ppp}
    b_\ell(\ppp)
    \dif{\ppp}, \\
    \int_{\psetp}
    c_\ell(\ppp)\herm
    H\myparen*{-\overline{\lambda_{\ell}}, \ppp}
    \dif{\ppp},\
    \int_{\psetp}
    \ppp_1
    c_\ell(\ppp)\herm
    H\myparen*{-\overline{\lambda_{\ell}}, \ppp}
    \dif{\ppp}, \\
    \int_{\psetp}
    c_\ell(\ppp)\herm
    \frac{\partial H}{\partial s}
    \myparen*{-\overline{\lambda_{\ell}}, \ppp}
    b_\ell(\ppp)
    \dif{\ppp},
  \end{gather*}
  and similarly with $\hH$.
  Starting with the first, we find that
  \begin{align*}
     &
    \int_{\psetp}
    H\myparen*{-\overline{\lambda_{\ell}}, \ppp}
    b_\ell(\ppp)
    \dif{\ppp}                                                            \\
     & =
    \int_{\psetp}
    \Bigl(
    H_{11}\myparen*{-\overline{\lambda_{\ell}}}
    + \ppp_1 H_{12}\myparen*{-\overline{\lambda_{\ell}}}
    + \ppp_2 H_{21}\myparen*{-\overline{\lambda_{\ell}}}                  \\*
     & \qquad\quad
    + \ppp_1 \ppp_2 H_{22}\myparen*{-\overline{\lambda_{\ell}}}
    \Bigr)
    \myparen*{b_{\ell, 1} + \ppp_1 b_{\ell, 2}}
    \dif{\ppp}                                                            \\
     & =
    H_{11}\myparen*{-\overline{\lambda_{\ell}}} b_{\ell, 1}
    + \frac{1}{2} H_{12}\myparen*{-\overline{\lambda_{\ell}}} b_{\ell, 1} \\*
     & \qquad
    + \frac{1}{2} H_{21}\myparen*{-\overline{\lambda_{\ell}}} b_{\ell, 1}
    + \frac{1}{4} H_{22}\myparen*{-\overline{\lambda_{\ell}}} b_{\ell, 1} \\*
     & \qquad
    + \frac{1}{2} H_{11}\myparen*{-\overline{\lambda_{\ell}}} b_{\ell, 2}
    + \frac{1}{3} H_{12}\myparen*{-\overline{\lambda_{\ell}}} b_{\ell, 2} \\*
     & \qquad
    + \frac{1}{4} H_{21}\myparen*{-\overline{\lambda_{\ell}}} b_{\ell, 2}
    + \frac{1}{6} H_{22}\myparen*{-\overline{\lambda_{\ell}}} b_{\ell, 2} \\
    % & =
    %   H\myparen*{-\overline{\lambda_{\ell}}, \frac{1}{2}, \frac{1}{2}}
    %   b_{\ell, 1}
    %   + \frac{1}{2}
    %   H\myparen*{-\overline{\lambda_{\ell}}, \frac{2}{3}, \frac{1}{2}}
    %   b_{\ell, 2} \\
    % & =
    %   \begin{bmatrix}
    %     I_{\nout} & \frac{1}{2} I_{\nout}
    %   \end{bmatrix}
    %   \cH\myparen*{-\overline{\lambda_{\ell}}}
    %   \begin{bmatrix}
    %     I_{\nin} \\
    %     \frac{1}{2} I_{\nin}
    %   \end{bmatrix}
    %   b_{\ell, 1} \\*
    % & \quad
    %   +
    %   \begin{bmatrix}
    %     I_{\nout} & \frac{1}{2} I_{\nout}
    %   \end{bmatrix}
    %   \cH\myparen*{-\overline{\lambda_{\ell}}}
    %   \begin{bmatrix}
    %     \frac{1}{2} I_{\nin} \\
    %     \frac{1}{3} I_{\nin}
    %   \end{bmatrix}
    %   b_{\ell, 2} \\
     & =
    \begin{bmatrix}
      I_{\nout} & \frac{1}{2} I_{\nout}
    \end{bmatrix}
    \cH\myparen*{-\overline{\lambda_{\ell}}}
    \begin{bmatrix}
      I_{\nin}             & \frac{1}{2} I_{\nin} \\
      \frac{1}{2} I_{\nin} & \frac{1}{3} I_{\nin}
    \end{bmatrix}
    \begin{bmatrix}
      b_{\ell, 1} \\
      b_{\ell, 2}
    \end{bmatrix}
  \end{align*}
  where we used~\eqref{eq:io-tf-fom} and~\eqref{eq:io-aux-revert-fom}.
  The second integral becomes
  \begin{align*}
     &
    \int_{\psetp}
    \ppp_2
    H\myparen*{-\overline{\lambda_{\ell}}, \ppp}
    b_\ell(\ppp)
    \dif{\ppp} \\
     & =
    \begin{bmatrix}
      \frac{1}{2} I_{\nout} & \frac{1}{3} I_{\nout}
    \end{bmatrix}
    \cH\myparen*{-\overline{\lambda_{\ell}}}
    \begin{bmatrix}
      I_{\nin}             & \frac{1}{2} I_{\nin} \\
      \frac{1}{2} I_{\nin} & \frac{1}{3} I_{\nin}
    \end{bmatrix}
    \begin{bmatrix}
      b_{\ell, 1} \\
      b_{\ell, 2}
    \end{bmatrix}.
  \end{align*}
  Stacking these two vertically (and using~\eqref{eq:io-aux-b-c}) gives us the
  left-hand side in the right Lagrange tangential
  condition~\eqref{eq:io-cond-1}.
  The other conditions follow similarly.
\end{proof}
Therefore, for this special case of~\eqref{eq:io-ss-rom},
we obtain more familiar bitangential Hermite interpolation.
More specifically, $\HtwoLtwo$-optimal reduced-order modeling of $H$ with $\hH$
in~\eqref{eq:io-ss-rom} is equivalent to a \emph{weighted} $\Htwo$-optimal
reduced-order modeling for $\cH$ with $\hcH$.
Thus, our general framework in Theorem~\ref{thm:h2l2-cond-diag} not only recovers
the results from~\cite{BauBBG11} but also extends it to \ac{mimo} systems and
eliminates the need for the \ac{fom} to have a rational transfer function.

Interpolatory conditions of Theorem~\ref{thm:h2l2-cond-io-aux} are in terms of the
parametric function $\hcH$, not the original parametric transfer function $H$.
The next result gives explicit interpolatory conditions in terms of $H$
for \ac{siso} systems.
\begin{cor}
  Let the assumptions in Theorem~\ref{thm:h2l2-cond-io-aux} hold.
  Furthermore, let $\nin = \nout = 1$.
  If $\mathfrak{b}_{\ell, 1} \neq 0$ and $\mathfrak{c}_{\ell, 1} \neq 0$, then
  \begin{align*}
    H\myparen*{
      -\overline{\lambda_{\ell}},
      \frac{\mathfrak{b}_{\ell, 2}}{\mathfrak{b}_{\ell, 1}},
      \ppp_2
    }
     & =
    \hH\myparen*{
      -\overline{\lambda_{\ell}},
      \frac{\mathfrak{b}_{\ell, 2}}{\mathfrak{b}_{\ell, 1}},
      \ppp_2
    },   \\
    \partial_{\ppp_2} H\myparen*{
      -\overline{\lambda_{\ell}},
      \frac{\mathfrak{b}_{\ell, 2}}{\mathfrak{b}_{\ell, 1}},
      \ppp_2
    }
     & =
    \partial_{\ppp_2} \hH\myparen*{
      -\overline{\lambda_{\ell}},
      \frac{\mathfrak{b}_{\ell, 2}}{\mathfrak{b}_{\ell, 1}},
      \ppp_2
    },   \\
    H\myparen*{
      -\overline{\lambda_{\ell}},
      \ppp_1,
      \frac{\overline{\mathfrak{c}_{\ell, 2}}}{\overline{\mathfrak{c}_{\ell, 1}}}
    }
     & =
    \hH\myparen*{
      -\overline{\lambda_{\ell}},
      \ppp_1,
      \frac{\overline{\mathfrak{c}_{\ell, 2}}}{\overline{\mathfrak{c}_{\ell, 1}}}
    },   \\
    \partial_{\ppp_1} H\myparen*{
      -\overline{\lambda_{\ell}},
      \ppp_1,
      \frac{\overline{\mathfrak{c}_{\ell, 2}}}{\overline{\mathfrak{c}_{\ell, 1}}}
    }
     & =
    \partial_{\ppp_1} \hH\myparen*{
      -\overline{\lambda_{\ell}},
      \ppp_1,
      \frac{\overline{\mathfrak{c}_{\ell, 2}}}{\overline{\mathfrak{c}_{\ell, 1}}}
    },   \\
    H'\myparen*{
      -\overline{\lambda_{\ell}},
      \frac{\mathfrak{b}_{\ell, 2}}{\mathfrak{b}_{\ell, 1}},
      \frac{\overline{\mathfrak{c}_{\ell, 2}}}{\overline{\mathfrak{c}_{\ell, 1}}}
    }
     & =
    \hH'\myparen*{
      -\overline{\lambda_{\ell}},
      \frac{\mathfrak{b}_{\ell, 2}}{\mathfrak{b}_{\ell, 1}},
      \frac{\overline{\mathfrak{c}_{\ell, 2}}}{\overline{\mathfrak{c}_{\ell, 1}}}
    },
  \end{align*}
  for all $\ppp_1, \ppp_2 \in \CC$ and $\ell = 1, 2, \ldots, \nrom$.
\end{cor}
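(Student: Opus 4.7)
The plan is to recognize that in the \ac{siso} case each of the three vector/matrix conditions in~\eqref{eq:io-cond} collapses to a single scalar equality, and then to recover the five stated pointwise interpolation conditions by choosing the parameter values $\ppp_1, \ppp_2$ so that the row $\begin{bmatrix} 1 & \ppp_2 \end{bmatrix}$ and the column $\begin{bmatrix} 1 \\ \ppp_1 \end{bmatrix}$ in the reconstruction formula~\eqref{eq:io-aux-revert-fom} become proportional to $\mathfrak{c}_{\ell}\herm$ and $\mathfrak{b}_{\ell}$ respectively.

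First, I will observe that by~\eqref{eq:io-aux-revert-fom}, for any $\ppp_1, \ppp_2 \in \CC$,
\begin{equation*}
  H(s, \ppp_1, \ppp_2)
  =
  \begin{bmatrix} 1 & \ppp_2 \end{bmatrix}
  \cH(s)
  \begin{bmatrix} 1 \\ \ppp_1 \end{bmatrix},
\end{equation*}
and analogously for $\hH$ and $\hcH$. Setting $\ppp_1 = \mathfrak{b}_{\ell, 2}/\mathfrak{b}_{\ell, 1}$ makes the column vector equal to $\mathfrak{b}_{\ell}/\mathfrak{b}_{\ell, 1}$, so
\begin{equation*}
  H\myparen*{s, \tfrac{\mathfrak{b}_{\ell, 2}}{\mathfrak{b}_{\ell, 1}}, \ppp_2}
  =
  \tfrac{1}{\mathfrak{b}_{\ell, 1}}
  \begin{bmatrix} 1 & \ppp_2 \end{bmatrix}
  \cH(s)\, \mathfrak{b}_{\ell},
\end{equation*}
and the same identity for $\hH, \hcH$. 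Evaluating at $s = -\overline{\lambda_{\ell}}$ and invoking~\eqref{eq:io-cond-1} yields the first claimed equality for every $\ppp_2$; differentiating in $\ppp_2$ before the evaluation (the expression is linear in $\ppp_2$) and using the same condition yields the second equality.

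Next, I do the symmetric step for the left side: setting $\ppp_2 = \overline{\mathfrak{c}_{\ell, 2}}/\overline{\mathfrak{c}_{\ell, 1}}$ produces $\begin{bmatrix} 1 & \ppp_2 \end{bmatrix} = \mathfrak{c}_{\ell}\herm/\overline{\mathfrak{c}_{\ell, 1}}$, so
\begin{equation*}
  H\myparen*{s, \ppp_1, \tfrac{\overline{\mathfrak{c}_{\ell, 2}}}{\overline{\mathfrak{c}_{\ell, 1}}}}
  =
  \tfrac{1}{\overline{\mathfrak{c}_{\ell, 1}}}
  \mathfrak{c}_{\ell}\herm\,
  \cH(s)
  \begin{bmatrix} 1 \\ \ppp_1 \end{bmatrix},
\end{equation*}
and the analog for $\hH$. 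Evaluating at $s = -\overline{\lambda_{\ell}}$ and using~\eqref{eq:io-cond-2} gives the third equality for all $\ppp_1$; differentiating in $\ppp_1$ (again the expression is linear in $\ppp_1$) and using the same condition gives the fourth.

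Finally, setting \emph{both} parameters simultaneously, $\ppp_1 = \mathfrak{b}_{\ell, 2}/\mathfrak{b}_{\ell, 1}$ and $\ppp_2 = \overline{\mathfrak{c}_{\ell, 2}}/\overline{\mathfrak{c}_{\ell, 1}}$, turns the reconstruction formula into a scalar multiple of $\mathfrak{c}_{\ell}\herm \cH(s) \mathfrak{b}_{\ell}$. Differentiating in $s$ commutes with the fixed scalar factor, so the fifth equality follows from the bitangential Hermite condition~\eqref{eq:io-cond-3}. The only nontriviality in the proof is bookkeeping: making sure the nonvanishing of $\mathfrak{b}_{\ell, 1}$ and $\mathfrak{c}_{\ell, 1}$ is exactly what permits the normalization that turns matrix-vector identities into scalar ones, and keeping the complex conjugates consistent on the output side (hence the bar on $\mathfrak{c}_{\ell, 2}/\mathfrak{c}_{\ell, 1}$). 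No further technical obstacle is anticipated, since the SISO assumption reduces everything to scalar algebra.
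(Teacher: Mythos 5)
Your proposal is correct and follows essentially the same route as the paper's (much terser) proof: both rely on the reconstruction formula~\eqref{eq:io-aux-revert}, the observation that $\partial_{\ppp_2} H(s,\ppp) = \begin{bmatrix} 0 & 1 \end{bmatrix} \cH(s) \begin{bmatrix} 1 \\ \ppp_1 \end{bmatrix}$ (and its analogues), and the substitutions $\ppp_1 = \mathfrak{b}_{\ell,2}/\mathfrak{b}_{\ell,1}$, $\ppp_2 = \overline{\mathfrak{c}_{\ell,2}}/\overline{\mathfrak{c}_{\ell,1}}$ that turn the row and column factors into scalar multiples of $\mathfrak{c}_{\ell}\herm$ and $\mathfrak{b}_{\ell}$, so that~\eqref{eq:io-cond} applies. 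Your write-up simply makes explicit the normalization step that the paper leaves implicit.
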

\begin{proof}
  The proof follows directly from~\eqref{eq:io-cond}
  using~\eqref{eq:io-aux-revert} and
  \begin{equation*}
    \partial_{\ppp_2} H(s, \ppp)
    =
    \begin{bmatrix}
      0 & 1
    \end{bmatrix}
    \cH(s)
    \begin{bmatrix}
      1 \\
      \ppp_1
    \end{bmatrix}
  \end{equation*}
  and similar expressions for $\partial_{\ppp_2} \hH$, $\partial_{\ppp_1} H$,
  and $\partial_{\ppp_1} \hH$.
\end{proof}
This states that $\HtwoLtwo$-optimality for the full-order and reduced-order
structure in~\eqref{eq:io-tf-fom} and~\eqref{eq:io-tf-rom} requires that,
in the parameter space, interpolation is enforced over lines instead of just a
finite number of points
(as generically done for parametric systems).

\section{Parameter in Dynamics}%
\label{sec:param-dyn}

In the previous section,
we considered a special case where the parametric dependence was only in $\Br$
and $\Cr$.
Now, we consider the case where $\psetp = [a, b] \subset \RR$, $a < b$, and
the \ac{fom} and \ac{rom} have the form,
respectively,
\begin{equation}\label{eq:dyn-ss-fom}
  \Ef(\ppp) = I, \ \
  \Af(\ppp) = \cAf_1 + \ppp \cAf_2, \ \
  \Bf(\ppp) = \cBf, \ \
  \Cf(\ppp) = \cCf
\end{equation}
and
\begin{equation}\label{eq:dyn-ss-rom}
  \Er(\ppp) = I, \ \
  \Ar(\ppp) = \cAr_1 + \ppp \cAr_2, \ \
  \Br(\ppp) = \cBr, \ \
  \Cr(\ppp) = \cCr
\end{equation}
with
\begin{align*}
  \cAf_k
   & =
  \mydiag{\nu_{k, 1}, \nu_{k, 2}, \ldots, \nu_{k, \nfom}}
  \ \text{ and} \\
  \cAr_k
   & =
  \mydiag{\lambda_{k, 1}, \lambda_{k, 2}, \ldots, \lambda_{k, \nrom}},
\end{align*}
for $k = 1, 2$.
In other words, we are assuming the parametric dependencies appear only in the
dynamics matrices $\Af$ and $\Ar$ and they are both composed of only two terms
which are simultaneously diagonalizable.
With these parametric forms,
the full-order and reduced-order transfer functions have the pole-residue forms
\begin{equation}\label{eq:dyn-tf}
  H(s, \ppp)
  =
  \sum_{i = 1}^{\nfom}
  \frac{\Phi_i}{s - \nu_i(\ppp)}, \quad
  \hH(s, \ppp)
  =
  \sum_{i = 1}^{\nrom}
  \frac{c_i b_i\herm}{s - \lambda_i(\ppp)},
\end{equation}
where
\begin{equation}\label{eq:nu-lambda}
  \nu_i(\ppp) = \nu_{1, i} + \ppp \nu_{2, i}
  \quad \text{and} \quad
  \lambda_i(\ppp) = \lambda_{1, i} + \ppp \lambda_{2, i}.
\end{equation}
Let $\measurep$ be the Lebesgue measure over $[a, b]$.
Furthermore,
for any $\sigma_a, \sigma_b \in \CC_-$,
define $\fundef{f_{\sigma_a, \sigma_b}}{\CC_+^2}{\CC}$ as
\begin{equation}\label{eq:f}
  f_{\sigma_a, \sigma_b}(s_a, s_b)
  =
  \frac{b - a}{(s_b - \sigma_b) - (s_a - \sigma_a)}
  \ln\myparen*{\frac{s_b - \sigma_b}{s_a - \sigma_a}}.
\end{equation}
Additionally, define the functions $\fundef{G, \hG}{\CC_+^2}{\CCoi}$ by
\begin{subequations}\label{eq:dyn-G}
  \begin{align}
    \label{eq:dyn-G-fom}
    G(s_a, s_b)
     & =
    \sum_{i = 1}^{\nfom}
    f_{\nu_i(a), \nu_i(b)}(s_a, s_b)
    \Phi_i
    \quad \text{and} \\
    \label{eq:dyn-G-rom}
    \hG(s_a, s_b)
     & =
    \sum_{i = 1}^{\nrom}
    f_{\lambda_i(a), \lambda_i(b)}(s_a, s_b)
    c_i b_i\herm.
  \end{align}
\end{subequations}
Note that $G$ and $\hG$ depend on the pole-residue forms~\eqref{eq:dyn-tf}
of $H$ and $\hH$, respectively.
Thus, one can consider $G$ as the full-order \emph{modified} function and
$\hG$ the reduced-order one.
Based on this setup,
we are ready to state the interpolatory optimality conditions in this setting.
\begin{thm}\label{thm:h2l2-cond-dyn}
  Let $H$ and $\hH$ be as given in~\eqref{eq:dyn-tf} and let $G$ and $\hG$
  be as defined in~\eqref{eq:dyn-G}.
  If $\hH$ is an $\HtwoLtwo$-optimal \ac{diagstrom} for $H$,
  then
  \begin{subequations}\label{eq:p-simple-cond}
    \begin{align}
      \label{eq:p-simple-cond-1}
      G\myparen*{
        -\overline{\lambda_i(a)},
        -\overline{\lambda_i(b)}
      }
      b_i
       & =
      \hG\myparen*{
        -\overline{\lambda_i(a)},
        -\overline{\lambda_i(b)}
      }
      b_i, \\
      \label{eq:p-simple-cond-2}
      c_i\herm
      G\myparen*{
        -\overline{\lambda_i(a)},
        -\overline{\lambda_i(b)}
      }
       & =
      c_i\herm
      \hG\myparen*{
        -\overline{\lambda_i(a)},
        -\overline{\lambda_i(b)}
      },   \\
      \label{eq:p-simple-cond-3}
      c_i\herm
      \frac{\partial G}{\partial s_a}\myparen*{
        -\overline{\lambda_i(a)},
        -\overline{\lambda_i(b)}
      }
      b_i
       & =
      c_i\herm
      \frac{\partial \hG}{\partial s_a}\myparen*{
        -\overline{\lambda_i(a)},
        -\overline{\lambda_i(b)}
      }
      b_i, \\
      \label{eq:p-simple-cond-4}
      c_i\herm
      \frac{\partial G}{\partial s_b}\myparen*{
        -\overline{\lambda_i(a)},
        -\overline{\lambda_i(b)}
      }
      b_i
       & =
      c_i\herm
      \frac{\partial \hG}{\partial s_b}\myparen*{
        -\overline{\lambda_i(a)},
        -\overline{\lambda_i(b)}
      }
      b_i,
    \end{align}
  \end{subequations}
  for $i = 1, 2, \ldots, \nrom$.
\end{thm}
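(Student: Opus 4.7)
The plan is to specialize Theorem~\ref{thm:h2l2-cond-diag} to the parameter-separable structure~\eqref{eq:dyn-ss-rom}. Here only $\Af$ and $\Ar$ carry parameter dependence; in particular $\nBr = \nCr = 1$ with $\cbr_1(\ppp) = \ccr_1(\ppp) = 1$, while $\nAr = 2$ with $\car_1(\ppp) = 1$ and $\car_2(\ppp) = \ppp$. Consequently, Theorem~\ref{thm:h2l2-cond-diag} provides, for each $\ell \in \{1, \ldots, \nrom\}$, one right tangential Lagrange identity, one left tangential Lagrange identity, and \emph{two} Hermite identities---weighted by $1$ and by $\ppp$. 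Substituting the pole--residue expansions~\eqref{eq:dyn-tf} and exploiting the affine structure~\eqref{eq:nu-lambda}, every such identity reduces to scalar integrals in $\ppp \in [a, b]$ whose integrands are powers of the affine function $\ppp \mapsto -\overline{\lambda_\ell(\ppp)} - \nu_i(\ppp)$ (for $H$), or of $\ppp \mapsto -\overline{\lambda_\ell(\ppp)} - \lambda_i(\ppp)$ (for $\hH$).

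For the Lagrange conditions, I would evaluate the elementary antiderivative of $(c_0 + c_1 \ppp)^{-1}$. With $u_i = -\overline{\lambda_\ell(a)} - \nu_i(a)$ and $v_i = -\overline{\lambda_\ell(b)} - \nu_i(b)$ and the identity $v_i - u_i = (b-a)(-\overline{\lambda_{2,\ell}} - \nu_{2,i})$, a direct computation gives
\begin{equation*}
  \int_a^b \frac{\dif\ppp}{-\overline{\lambda_\ell(\ppp)} - \nu_i(\ppp)}
  = \frac{b-a}{v_i - u_i}\ln\myparen*{\frac{v_i}{u_i}}
  = f_{\nu_i(a), \nu_i(b)}\myparen*{-\overline{\lambda_\ell(a)}, -\overline{\lambda_\ell(b)}}.
\end{equation*}
Summing over $i$ weighted by $\Phi_i$ on the full-order side and over the analogous quantities on the reduced-order side identifies the two sides of the conditions~\eqref{eq:h2l2-cond-diag-a} and~\eqref{eq:h2l2-cond-diag-b} with $G$ and $\hG$ evaluated at $(-\overline{\lambda_\ell(a)}, -\overline{\lambda_\ell(b)})$, flanked by $b_\ell$ or $c_\ell\herm$; this produces~\eqref{eq:p-simple-cond-1} and~\eqref{eq:p-simple-cond-2}.

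For the Hermite conditions, a parallel computation with $(c_0 + c_1 \ppp)^{-2}$ in the integrand---supplemented, for the $\ppp$-weighted case, by a partial-fraction step that produces both a rational and a logarithmic contribution---together with direct differentiation of the definition~\eqref{eq:f} of $f_{\sigma_a, \sigma_b}$, should show that the unweighted and $\ppp$-weighted Hermite integrals equal $(\partial_{s_a}G + \partial_{s_b}G)$ and $(a\,\partial_{s_a}G + b\,\partial_{s_b}G)$, respectively, both evaluated at $(-\overline{\lambda_\ell(a)}, -\overline{\lambda_\ell(b)})$. Since the coefficient matrix $\bigl(\begin{smallmatrix}1 & 1\\ a & b\end{smallmatrix}\bigr)$ has determinant $b - a \neq 0$, the two identities returned by Theorem~\ref{thm:h2l2-cond-diag} decouple into separate statements about $\partial_{s_a}G$ and $\partial_{s_b}G$, producing~\eqref{eq:p-simple-cond-3} and~\eqref{eq:p-simple-cond-4} after flanking by $c_\ell\herm$ and $b_\ell$.

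The main obstacle is the identification of the $\ppp$-weighted Hermite integral with the combination $a\,\partial_{s_a}G + b\,\partial_{s_b}G$: matching the logarithmic and rational pieces of the integral precisely with the partial derivatives of~\eqref{eq:f} takes some careful bookkeeping, and the Vandermonde-like $2 \times 2$ structure only becomes transparent after rewriting everything in terms of $u_i$ and $v_i$. Once that is in place, the inversion to separate $\partial_{s_a}G$ from $\partial_{s_b}G$ is immediate.
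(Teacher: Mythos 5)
Your proposal follows essentially the same route as the paper's proof: specialize Theorem~\ref{thm:h2l2-cond-diag} using the parameter-independence of $\Br$, $\Cr$ and the two-term affine structure of $\Ar$, evaluate the resulting elementary integrals in $\ppp$ to identify them with $G$, $\hG$, and (for the Hermite conditions) with the combinations $\partial_{s_a}G + \partial_{s_b}G$ and $a\,\partial_{s_a}G + b\,\partial_{s_b}G$, then invert the nonsingular $2\times 2$ system to decouple~\eqref{eq:p-simple-cond-3} and~\eqref{eq:p-simple-cond-4}. The deferred computations (the $\ppp$-weighted Hermite integral via partial fractions and the bookkeeping in terms of $u_i, v_i$) are exactly the ``tedious algebraic manipulations'' the paper carries out, so the sketch is sound; an overall constant factor in the unweighted Hermite identity is immaterial since it multiplies both sides equally.
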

\begin{proof}
  Due to the special form of the \ac{rom} in~\eqref{eq:dyn-ss-rom} and
  its transfer function $\hH$ in~\eqref{eq:dyn-tf},
  the quantities $b_{\ell}, c_{\ell}, \ccr_k, \cbr_j$
  in~\eqref{eq:param-pole-res} and~\eqref{eq:h2l2-cond-diag} in
  Theorem~\ref{thm:h2l2-cond-diag} are parameter-independent.
  Thus, to analyze the first Lagrange conditions~\eqref{eq:h2l2-cond-diag-a}
  and~\eqref{eq:h2l2-cond-diag-b}
  for the special case~\eqref{eq:dyn-ss-fom} and~\eqref{eq:dyn-ss-rom},
  it is enough to focus on the integrals
  \begin{equation*}
    \int_{\psetp}
    H\myparen*{-\overline{\lambda_i(\ppp)}, \ppp}
    \difmp{\ppp}\quad\mbox{and}\quad\int_{\psetp}
    \hH\myparen*{-\overline{\lambda_i(\ppp)}, \ppp}
    \difmp{\ppp}.
  \end{equation*}
  We start with the first integral involving $H$.
  Using the pole-residue form of $H$ from~\eqref{eq:dyn-tf} and
  the expression for $\lambda_i(\ppp)$ from~\eqref{eq:nu-lambda},
  we obtain
  \begin{align*}
     &
    \int_{\psetp}
    H\myparen*{-\overline{\lambda_i(\ppp)}, \ppp}
    \difmp{\ppp}
    =
    \int_a^b
    H\myparen*{-\overline{\lambda_i(\ppp)}, \ppp}
    \dif{\ppp} \\
     & =
    \int_a^b
    \sum_{j = 1}^{\nfom}
    \frac{\Phi_j}{-\overline{\lambda_i(\ppp)} - \nu_{1, j} - \ppp \nu_{2, j}}
    \dif{\ppp} \\
     & =
    \sum_{j = 1}^{\nfom}
    \int_a^b
    \frac{\Phi_j}{
      -\overline{\lambda_{1, i}}
      - \ppp \overline{\lambda_{2, i}}
      - \nu_{1, j}
      - \ppp \nu_{2, j}
    }
    \dif{\ppp} \\
     & =
    \sum_{j = 1}^{\nfom}
    \int_a^b
    \frac{\Phi_j}{
      -\overline{\lambda_{1, i}}
      - \nu_{1, j}
      + \ppp
      \myparen*{
        -\overline{\lambda_{2, i}}
        - \nu_{2, j}
      }
    }
    \dif{\ppp}.
  \end{align*}
  Then integrating the last equality gives
  \begin{align*}
     &
    \int_{\psetp}
    H\myparen*{-\overline{\lambda_i(\ppp)}, \ppp}
    \difmp{\ppp} \\
     & =
    \sum_{j = 1}^{\nfom}
    \frac{\Phi_j}{
      -\overline{\lambda_{2, i}}
      - \nu_{2, j}
    }
    \ln\myparen*{
      \frac{
        -\overline{\lambda_{1, i}}
        - \nu_{1, j}
        + b
        \myparen*{
          -\overline{\lambda_{2, i}}
          - \nu_{2, j}
        }
      }{
        -\overline{\lambda_{1, i}}
        - \nu_{1, j}
        + a
        \myparen*{
          -\overline{\lambda_{2, i}}
          - \nu_{2, j}
        }
      }
    }            \\
     & =
    \sum_{j = 1}^{\nfom}
    \frac{\Phi_j (b - a)}{
      \myparen*{
        -\overline{\lambda_i(b)}
        - \nu_j(b)
      }
      - \myparen*{
        -\overline{\lambda_i(a)}
        - \nu_j(a)
      }
    }
    \ln\myparen*{
      \frac{
        -\overline{\lambda_i(b)}
        - \nu_j(b)
      }{
        -\overline{\lambda_i(a)}
        - \nu_j(a)
      }
    }            \\
     & =
    G\myparen*{-\overline{\lambda_i(a)}, -\overline{\lambda_i(b)}}.
  \end{align*}
  Similarly, one can show that
  \begin{align*}
    \int_{\psetp}
    \hH\myparen*{-\overline{\lambda_i(\ppp)}, \ppp}
    \difmp{\ppp}
     & =
    \hG\myparen*{-\overline{\lambda_i(a)}, -\overline{\lambda_i(b)}}.
  \end{align*}
  Therefore, the first two optimality conditions~\eqref{eq:h2l2-cond-diag-a}
  and~\eqref{eq:h2l2-cond-diag-b}
  in Theorem~\ref{thm:h2l2-cond-diag} lead to the interpolatory
  conditions~\eqref{eq:p-simple-cond-1} and~\eqref{eq:p-simple-cond-2}.

  To derive the remaining two conditions~\eqref{eq:p-simple-cond-3}
  and~\eqref{eq:p-simple-cond-4} from~\eqref{eq:h2l2-cond-diag-c},
  we now consider the integrals
  \begin{equation}\label{eq:two-integrals}
    \int_{\psetp}
    \ppp^k
    \frac{\partial H}{\partial s}
    \myparen*{-\overline{\lambda_i(\ppp)}, \ppp}
    \difmp{\ppp}, \quad
    k = 0, 1.
  \end{equation}
  We will need the expressions for the partial derivatives of $G$.
  It directly follows from~\eqref{eq:dyn-G-fom} that
  \begin{equation}\label{eq:dGds}
    \frac{\partial G}{\partial s_a}(s_a, s_b)
    =
    \sum_{i = 1}^{\nfom}
    \frac{\partial f_{\nu_i(a), \nu_i(b)}}{\partial s_a}(s_a, s_b)
    \Phi_i.
  \end{equation}
  Similar expressions hold for
  $\frac{\partial G}{\partial s_b}(s_a, s_b)$,
  $\frac{\partial \hG}{\partial s_a}(s_a, s_b)$, and
  $\frac{\partial \hG}{\partial s_b}(s_a, s_b)$
  as well.
  Thus, to compute these partial derivatives,
  we simply focus on $f_{\sigma_a, \sigma_b}$ and obtain,
  via direct differentiation of~\eqref{eq:f}, that
  \begin{align*}
     &
    \frac{\partial f_{\sigma_a, \sigma_b}}{\partial s_a}(s_a, s_b) \\
     & =
    \frac{b - a}{{((s_b - \sigma_b) - (s_a - \sigma_a))}^2}
    \ln\myparen*{\frac{s_b - \sigma_b}{s_a - \sigma_a}}            \\*
     & \quad
    - \frac{b - a}{(s_b - \sigma_b) - (s_a - \sigma_a)}
    \cdot \frac{1}{s_a - \sigma_a}, \quad \text{and}               \\
     &
    \frac{\partial f_{\sigma_a, \sigma_b}}{\partial s_b}(s_a, s_b) \\
     & =
    -\frac{b - a}{{((s_b - \sigma_b) - (s_a - \sigma_a))}^2}
    \ln\myparen*{\frac{s_b - \sigma_b}{s_a - \sigma_a}}            \\*
     & \quad
    + \frac{b - a}{(s_b - \sigma_b) - (s_a - \sigma_a)}
    \cdot \frac{1}{s_b - \sigma_b}.
  \end{align*}
  Using the pole-residue form of $H$ from~\eqref{eq:dyn-tf}
  in the first integral in~\eqref{eq:two-integrals} gives
  \begin{align*}
     &
    \int_{\psetp}
    \frac{\partial H}{\partial s}
    \myparen*{-\overline{\lambda_i(\ppp)}, \ppp}
    \difmp{\ppp}
    =
    \int_a^b
    \frac{\partial H}{\partial s}
    \myparen*{-\overline{\lambda_i(\ppp)}, \ppp}
    \dif{\ppp}      \\
     & =
    \int_a^b
    \sum_{j = 1}^{\nfom}
    \frac{-\Phi_j}{\myparen*{
        -\overline{\lambda_i(\ppp)} - \nu_{1, j} - \ppp \nu_{2, j}
      }^2}
    \dif{\ppp}      \\
     & =
    \sum_{j = 1}^{\nfom}
    \int_a^b
    \frac{-\Phi_j}{
      \myparen*{
        -\overline{\lambda_{1, i}}
        - \nu_{1, j}
        + \ppp
        \myparen*{
          -\overline{\lambda_{2, i}}
          - \nu_{2, j}
        }
      }^2
    }
    \dif{\ppp}      \\
     & =
    \sum_{j = 1}^{\nfom}
    \frac{-\Phi_j}{
      -\overline{\lambda_{2, i}}
      - \nu_{2, j}
    }
    \Biggl(
    \frac{1}{
      -\overline{\lambda_{1, i}}
      - \nu_{1, j}
      + b
      \myparen*{
        -\overline{\lambda_{2, i}}
        - \nu_{2, j}
      }
    }               \\*
     & \qquad\qquad
    - \frac{1}{
      -\overline{\lambda_{1, i}}
      - \nu_{1, j}
      + a
      \myparen*{
        -\overline{\lambda_{2, i}}
        - \nu_{2, j}
      }
    }
    \Biggr).
  \end{align*}
  After various algebraic manipulations to replace
  $\lambda_{k, i}$ and $\nu_{k, j}$ by
  $\lambda_i(\cdot)$ and $\nu_j(\cdot)$ and using~\eqref{eq:dGds},
  we obtain
  \begin{align*}
     &
    \int_{\psetp}
    \frac{\partial H}{\partial s}
    \myparen*{-\overline{\lambda_i(\ppp)}, \ppp}
    \difmp{\ppp}          \\
     & =
    \sum_{j = 1}^{\nfom}
    \frac{-\Phi_j (b - a)}{
      \myparen*{
        -\overline{\lambda_i(b)}
        - \nu_j(b)
      }
      - \myparen*{
        -\overline{\lambda_i(a)}
        - \nu_j(a)
      }
    }                     \\*
     & \qquad\quad \times
    \myparen*{
      \frac{1}{
        -\overline{\lambda_i(b)}
        - \nu_j(b)
      }
      - \frac{1}{
        -\overline{\lambda_i(a)}
        - \nu_j(a)
      }
    }                     \\
     & =
    -(b - a)
    \myparen*{
      \frac{\partial G}{\partial s_a}
      \myparen*{-\overline{\lambda_i(a)}, -\overline{\lambda_i(b)}}
      +
      \frac{\partial G}{\partial s_b}
      \myparen*{-\overline{\lambda_i(a)}, -\overline{\lambda_i(b)}}
    }.
  \end{align*}
  Following the same derivations,
  one obtains similar expressions involving $\hH$ and $\hG$,
  which shows that
  \begin{equation}\label{eq:GaGb1}
    \begin{aligned}
       &
      c_i\herm
      \myparen*{
        \frac{\partial G}{\partial s_a}
        \myparen*{-\overline{\lambda_i(a)}, -\overline{\lambda_i(b)}}
        +
        \frac{\partial G}{\partial s_b}
        \myparen*{-\overline{\lambda_i(a)}, -\overline{\lambda_i(b)}}
      }
      b_i  \\*
       & =
      c_i\herm
      \myparen*{
        \frac{\partial \hG}{\partial s_a}
        \myparen*{-\overline{\lambda_i(a)}, -\overline{\lambda_i(b)}}
        +
        \frac{\partial \hG}{\partial s_b}
        \myparen*{-\overline{\lambda_i(a)}, -\overline{\lambda_i(b)}}
      }
      b_i.
    \end{aligned}
  \end{equation}
  Focusing on the second integral in~\eqref{eq:two-integrals}, we find
  \begin{align*}
     &
    \int_{\psetp}
    \ppp
    \frac{\partial H}{\partial s}
    \myparen*{-\overline{\lambda_i(\ppp)}, \ppp}
    \difmp{\ppp}
    =
    \int_a^b
    \ppp
    \frac{\partial H}{\partial s}
    \myparen*{-\overline{\lambda_i(\ppp)}, \ppp}
    \dif{\ppp}      \\
     & =
    \sum_{j = 1}^{\nfom}
    \int_a^b
    \frac{-\ppp \Phi_j}{
      \myparen*{
        -\overline{\lambda_{1, i}}
        - \nu_{1, j}
        + \ppp
        \myparen*{
          -\overline{\lambda_{2, i}}
          - \nu_{2, j}
        }
      }^2
    }
    \dif{\ppp}      \\
     & =
    \sum_{j = 1}^{\nfom}
    \frac{-\Phi_j}{
      \myparen*{
        -\overline{\lambda_{2, i}}
        - \nu_{2, j}
      }^2
    }
    \Biggl(
    \frac{
      -\overline{\lambda_{1, i}}
      - \nu_{1, j}
    }{
      -\overline{\lambda_i(b)}
      - \nu_j(b)
    }               \\*
     & \qquad\qquad
    - \frac{
      -\overline{\lambda_{1, i}}
      - \nu_{1, j}
    }{
      -\overline{\lambda_i(a)}
      - \nu_j(a)
    }
    +
    \ln\myparen*{
      \frac{
        -\overline{\lambda_i(b)}
        - \nu_j(b)
      }{
        -\overline{\lambda_i(a)}
        - \nu_j(a)
      }
    }
    \Biggr).
    % & =
    %   \sum_{j = 1}^{\nfom}
    %   \frac{-\Phi_j}{
    %     \myparen*{
    %       -\overline{\lambda_{2, i}}
    %       - \nu_{2, j}
    %     }^2
    %   }
    %   \cdot
    %   \frac{
    %     (a - b)
    %     \myparen*{
    %       -\overline{\lambda_{1, i}}
    %       - \nu_{1, j}
    %     }
    %     \myparen*{
    %       -\overline{\lambda_{2, i}}
    %       - \nu_{2, j}
    %     }
    %   }{
    %     \myparen*{
    %       -\overline{\lambda_i(b)}
    %       - \nu_j(b)
    %     }
    %     \myparen*{
    %       -\overline{\lambda_i(a)}
    %       - \nu_j(a)
    %     }
    %   } \\*
    % & \qquad +
    %   \sum_{j = 1}^{\nfom}
    %   \frac{-\Phi_j}{
    %     \myparen*{
    %       -\overline{\lambda_{2, i}}
    %       - \nu_{2, j}
    %     }^2
    %   }
    %   \ln\myparen*{
    %     \frac{
    %       -\overline{\lambda_i(b)}
    %       - \nu_j(b)
    %     }{
    %       -\overline{\lambda_i(a)}
    %       - \nu_j(a)
    %     }
    %   } \\
    % & =
    %   \sum_{j = 1}^{\nfom}
    %   \frac{-\Phi_j}{
    %     -\overline{\lambda_{2, i}}
    %     - \nu_{2, j}
    %   }
    %   \cdot
    %   \frac{
    %     -b
    %     \myparen*{
    %       -\overline{\lambda_i(a)}
    %       - \nu_j(a)
    %     }
    %     + a
    %     \myparen*{
    %       -\overline{\lambda_i(b)}
    %       - \nu_j(c)
    %     }
    %   }{
    %     \myparen*{
    %       -\overline{\lambda_i(b)}
    %       - \nu_j(b)
    %     }
    %     \myparen*{
    %       -\overline{\lambda_i(a)}
    %       - \nu_j(a)
    %     }
    %   } \\*
    % & \qquad +
    %   \sum_{j = 1}^{\nfom}
    %   \frac{-\Phi_j}{
    %     \myparen*{
    %       -\overline{\lambda_{2, i}}
    %       - \nu_{2, j}
    %     }^2
    %   }
    %   \ln\myparen*{
    %     \frac{
    %       -\overline{\lambda_i(b)}
    %       - \nu_j(b)
    %     }{
    %       -\overline{\lambda_i(a)}
    %       - \nu_j(a)
    %     }
    %   } \\
  \end{align*}
  After more tedious algebraic manipulations, we obtain
  \begin{align*}
     &
    \int_{\psetp}
    \ppp
    \frac{\partial H}{\partial s}
    \myparen*{-\overline{\lambda_i(\ppp)}, \ppp}
    \difmp{\ppp}           \\
     & =
    \sum_{j = 1}^{\nfom}
    \frac{\Phi_j (b - a)}{
      \myparen*{
        -\overline{\lambda_i(b)}
        - \nu_j(b)
      }
      - \myparen*{
        -\overline{\lambda_i(a)}
        - \nu_j(a)
      }
    }                      \\*
     & \qquad\quad \times
    \myparen*{
      \frac{b}{
        -\overline{\lambda_i(b)}
        - \nu_j(b)
      }
      - \frac{a}{
        -\overline{\lambda_i(a)}
        - \nu_j(a)
      }
    }                      \\*
     & \quad +
    \sum_{j = 1}^{\nfom}
    \frac{-\Phi_j {(b - a)}^2}{
      \myparen*{
        \myparen*{
          -\overline{\lambda_i(b)}
          - \nu_j(b)
        }
        - \myparen*{
          -\overline{\lambda_i(a)}
          - \nu_j(a)
        }
      }^2
    }                      \\*
     & \qquad\qquad \times
    \ln\myparen*{
      \frac{
        -\overline{\lambda_i(b)}
        - \nu_j(b)
      }{
        -\overline{\lambda_i(a)}
        - \nu_j(a)
      }
    }                      \\
     & =
    a
    \frac{\partial G}{\partial s_a}
    \myparen*{-\overline{\lambda_i(a)}, -\overline{\lambda_i(b)}}
    +
    b
    \frac{\partial G}{\partial s_b}
    \myparen*{-\overline{\lambda_i(a)}, -\overline{\lambda_i(b)}}.
  \end{align*}
  As before, with similar expressions involving $\hH$ and $\hG$, we obtain
  \begin{equation}\label{eq:GaGb2}
    \begin{aligned}
       &
      c_i\herm
      \myparen*{
        a
        \frac{\partial G}{\partial s_a}
        \myparen*{-\overline{\lambda_i(a)}, -\overline{\lambda_i(b)}}
        +
        b
        \frac{\partial G}{\partial s_b}
        \myparen*{-\overline{\lambda_i(a)}, -\overline{\lambda_i(b)}}
      }
      b_i  \\*
       & =
      c_i\herm
      \myparen*{
        a
        \frac{\partial \hG}{\partial s_a}
        \myparen*{-\overline{\lambda_i(a)}, -\overline{\lambda_i(b)}}
        +
        b
        \frac{\partial \hG}{\partial s_b}
        \myparen*{-\overline{\lambda_i(a)}, -\overline{\lambda_i(b)}}
      }
      b_i.
    \end{aligned}
  \end{equation}
  Then~\eqref{eq:GaGb1} and~\eqref{eq:GaGb2} give the last two optimality
  conditions~\eqref{eq:p-simple-cond-3} and~\eqref{eq:p-simple-cond-4},
  thus concluding the proof.
\end{proof}
Theorem~\ref{thm:h2l2-cond-dyn} proves that
for this class of parametric \ac{lti} systems,
bitangential Hermite interpolation,
once again,
forms the foundation of the $\Ltwo$-optimal approximation.
The interpolation is based on a modified, two-variable transfer function $G$,
and has to be enforced at the reflected boundary values of the poles.
This is the first such result for parametric \ac{lti} systems
where the system poles vary with the parameters.
Therefore, we have extended the classical bitangential Hermite interpolation
conditions from non-parametric $\Htwo$-optimal approximation to parametric
$\HtwoLtwo$-optimal approximation.

\section{Numerical Experiments}

In this section, we numerically demonstrate \Cref{thm:h2l2-cond-dyn} on two
numerical examples.
In both cases, we start with \iac{fom} of the form in~\eqref{eq:dyn-ss-fom} and
numerically find a locally $\HtwoLtwo$-optimal \ac{rom} of the from
in~\eqref{eq:dyn-ss-rom} via the BFGS method implemented in
SciPy~\cite{VirGOetal20}.
The gradients are computed based on the expressions in~\cite{HunMMS22} and using
SciPy's numerical quadrature.
The $\Htwo$ norms are computed using pyMOR~\cite{MliRS21}.
Then Theorem~5.1 is verified by computing the relative errors
in~\eqref{eq:p-simple-cond}, i.e.,
computing the absolute difference between the left and the right-hand sides and
dividing by the absolute value of the left-hand side
(since we focus on \ac{siso} systems in the numerical examples,
there is only one Lagrange interpolation condition).
In other words, in both cases we show that optimal \acp{diagstrom} satisfy the
developed interpolatory optimal conditions.

The code to reproduce the results is available at~\cite{Mli24b}.

\subsection{Synthetic Parametric Model}

We first consider a variant of the synthetic parametric model from MOR
Wiki~\cite{morwiki_synth_pmodel}.
In particular, we consider \iac{fom} of order $6$ (instead of $100$), i.e.,
$\Ef(\pp) = I$,
\begin{gather*}
  \Af(\pp) =
  \begin{bmatrix}
    -10 \pp & 10                                            \\
    -10     & -10 \pp                                       \\
            &         & -30 \pp & 30                        \\
            &         & -30     & -30 \pp                   \\
            &         &         &         & 50 \pp & 50     \\
            &         &         &         & -50    & 50 \pp
  \end{bmatrix}\!, \\
  \Bf(\pp) =
  \begin{bmatrix}
    2 \\
    0 \\
    2 \\
    0 \\
    2 \\
    0
  \end{bmatrix}\!, \quad
  \Cf(\pp) =
  \begin{bmatrix}
    1 & 0 & 1 & 0 & 1 & 0
  \end{bmatrix}\!,
\end{gather*}
over the parameter space $\pset = [\frac{1}{50}, 1]$.
To find a locally $\HtwoLtwo$-optimal \ac{rom} of order $4$,
we initialize the BFGS-based minimization with the \ac{fom} truncated to the
first $4$ states.
To obtain \iac{diagstrom},
we enforce the complex diagonal structure with $\Er(\pp) = I$,
\begin{gather*}
  \Ar(\pp) =
  \begin{bmatrix}
    x_1 + \pp x_2  & x_3 + \pp x_4                                  \\
    -x_3 - \pp x_4 & x_1 + \pp x_2                                  \\
                   &               & x_5 + \pp x_6  & x_7 + \pp x_8 \\
                   &               & -x_7 - \pp x_8 & x_5 + \pp x_6
  \end{bmatrix}\!, \\
  \Br(\pp) =
  \begin{bmatrix}
    2 \\
    0 \\
    2 \\
    0
  \end{bmatrix}\!, \quad \text{and} \quad
  \Cr(\pp) =
  \begin{bmatrix}
    x_9 & x_{10} & x_{11} & x_{12}
  \end{bmatrix}\!.
\end{gather*}
Upon the convergence of BFGS,
we obtain
\begin{align*}
  \Ar(\pp)
   & =
  \mydiag*{\Ar_1(\pp), \Ar_2(\pp)},               \\
  \Ar_1(\pp)
   & =
  \begin{bmatrix}
    -7.0213 \times 10^{-3} & 9.9975                 \\
    -9.9975                & -7.0213 \times 10^{-3}
  \end{bmatrix} \\*
   & \quad
  + \pp
  \begin{bmatrix}
    -11.014  & 0.24074 \\
    -0.24074 & -11.014
  \end{bmatrix}                              \\
  \Ar_2(\pp)
   & =
  \begin{bmatrix}
    -1.6795 - 39.184 \pp  & 29.261 + 0.95464 \pp \\
    -29.261 - 0.95464 \pp & -1.6795 - 39.184 \pp
  \end{bmatrix}\!,    \\
  \Cr(\pp)
   & =
  \begin{bmatrix}
    1.1211 & -0.019113 & 1.7966 & 0.65666
  \end{bmatrix}\!.
\end{align*}
Then, we check whether the converged \ac{rom} satisfies the newly developed
optimality conditions.
The relative errors in Lagrange interpolation of $G$ in \Cref{thm:h2l2-cond-dyn}
for the two complex conjugate pairs are
$8.496 \times 10^{-9}$ and $2.105 \times 10^{-8}$.
For interpolation of $\partial G / \partial s_a$, we have
$1.6114 \times 10^{-8}$ and $1.2166 \times 10^{-7}$.
Finally, for interpolation of $\partial G / \partial s_b$, we have
$4.9016 \times 10^{-8}$ and $2.0029 \times 10^{-7}$.
Thus, the resulting optimal \ac{rom} satisfies the interpolatory optimality
conditions (to the accuracy of the stopping criteria of the minimization
algorithm).

\subsection{Parametric Penzl's FOM}

Next we consider a parametric variant of Penzl's FOM from the SLICOT benchmark
collection~\cite{slicot}.
Specifically, we consider \iac{fom} of order $16$ with
$\Ef(\pp) = I$,
\begin{gather*}
  \Af(\pp) = \mydiag*{\Af_1(\pp), \Af_2}, \\
  \Af_1(\pp) =
  \begin{bmatrix}
    -1   & \pp \\
    -\pp & -1
  \end{bmatrix}\!,\
  \Af_2 = \mydiag{-1, -2, \ldots, -10}, \\
  \Bf(\pp)\tran = \Cf(\pp) =
  \begin{bmatrix}
    5 & 5 & 1 & 1
  \end{bmatrix}\!,
\end{gather*}
and $\pset = [1, 100]$.

Setting the initial \ac{rom} based on the \ac{fom} truncated to the first $3$
states and running minimization, we obtain
$\Er(\pp) = I$,
\begin{align*}
  \Ar(\pp)
   & =
  \mydiag*{\Ar_1(\pp), \Ar_2(\pp)},              \\
  \Ar_1(\pp)
   & =
  \begin{bmatrix}
    -1.0030                & 2.2567 \times 10^{-3} \\
    -2.2567 \times 10^{-3} & -1.0030
  \end{bmatrix} \\
   & \quad
  + \pp
  \begin{bmatrix}
    7.2387 \times 10^{-6} & 1.0000                \\
    -1.0000               & 7.2387 \times 10^{-6}
  \end{bmatrix}\!,  \\
  \Ar_2(\pp)
   & =
  {-3.5530} + 2.4940 \times 10^{-4} \pp,         \\
  \Br(\pp)\tran
   & =
  \begin{bmatrix}
    2 & 0 & 1
  \end{bmatrix}\!,                               \\
  \Cr(\pp)
   & =
  \begin{bmatrix}
    25.063 & -0.053279 & 8.7695
  \end{bmatrix}\!.
\end{align*}
The relative errors in the Lagrange and the two Hermite conditions
for the first complex conjugate pair of poles are
\[
  1.0660 \times 10^{-10},\
  1.9085 \times 10^{-9}, \text{ and }
  1.5356 \times 10^{-9}.
\]
For the real pole they are
\[
  4.4460 \times 10^{-10},\
  4.4054 \times 10^{-10}, \text{ and }
  1.6685 \times 10^{-9}.
\]
Therefore, similar to the previous example,
we find good agreement with the theory.

\section{Conclusions}%
\label{sec:conclusion}

We derived interpolatory necessary optimality conditions for
$\HtwoLtwo$-optimal reduced-order modeling of
parametric \ac{lti} systems with general diagonal structure.
Then we give conditions for special cases where only inputs and outputs or
only the dynamics are parameterized.
Future work includes the derivation of an iterative, IRKA-like algorithm to
compute $\HtwoLtwo$-optimal \acp{strom}.

%%% Local Variables:
%%% mode: latex
%%% TeX-master: "h2l2_ieee"
%%% End:

% LocalWords: fom lti rom strom diagstrom siso mimo

\bibliographystyle{alphaurl}
\bibliography{my}
\end{document}